\definecolor{vegasgold}{rgb}{0.77, 0.7, 0.35}
\definecolor{darkgoldenrod}{rgb}{0.72, 0.53, 0.04}
\definecolor{gold(metallic)}{rgb}{0.83, 0.69, 0.22}
\newtheorem{lthm}{Theorem}
\DeclareFontFamily{U}{wncy}{}
\DeclareFontShape{U}{wncy}{m}{n}{<->wncyr10}{}
\DeclareSymbolFont{mcy}{U}{wncy}{m}{n}
\DeclareMathSymbol{\Sh}{\mathord}{mcy}{"58}
\tikzset{every loop/.style={min distance=10mm,looseness=10}}
\tikzstyle{vertex}=[auto=left,circle,minimum size=1pt,inner sep=0pt]
\newtheorem{theorem}{Theorem}[section]
\newtheorem{lemma}[theorem]{Lemma}
\newtheorem{ass}[theorem]{Assumption}
\newtheorem*{theorem*}{Theorem}
\newtheorem*{ass*}{Assumption}
\newtheorem{definition}[theorem]{Definition}
\newtheorem{remark}[theorem]{Remark}
\newtheorem{proposition}[theorem]{Proposition}
\newcommand{\cF}{\mathcal{F}}
\newcommand{\cE}{\mathcal{E}}
\newcommand{\Z}{\mathbb{Z}}
\newcommand{\Q}{\mathbb{Q}}
\newcommand{\F}{\mathbb{F}}
\newcommand{\cC}{\mathcal{C}}
\newcommand{\cS}{\mathcal{S}}
\newcommand{\op}[1]{\operatorname{#1}}
\numberwithin{equation}{section}
\begin{document}

\title[Counting entanglements]{Counting elliptic curves with prescribed entanglements}

\author[Z.~Couvillion]{Zachary Couvillon\, \orcidlink{0000-0003-2687-3718}}
\address[Couvillion]{Cornell University \\ Department of Mathematics \\
  Malott Hall, Ithaca, NY 14853-4201 USA} 
\email{zjc28@cornell.edu}

\author[A.~Ray]{Anwesh Ray\, \orcidlink{0000-0001-6946-1559}}
\address[Ray]{Chennai Mathematical Institute, H1, SIPCOT IT Park, Kelambakkam, Siruseri, Tamil Nadu 603103, India}
\email{anwesh@cmi.ac.in}

\keywords{Galois images of elliptic curves, entanglements of division fields, arithmetic statistics, modular parameterizations}
\subjclass[2020]{11G05, 11R45, 11P21}

\maketitle

\begin{abstract}
We establish asymptotic lower bounds for the number of elliptic curves over \( \mathbb{Q} \) with prescribed entanglement of division fields, ordered by naive height. Such elliptic curves are obtained as $1$-parameter families arising from certain genus $0$ modular curves. We apply techniques from the geometry of numbers and sieve methods to prove that the number of elliptic curves with unexplained entanglements $\mathbb{Q}(E[2]) \cap \mathbb{Q}(E[3]) \neq \mathbb{Q}$ and $\mathbb{Q}(E[2]) \cap \mathbb{Q}(E[5]) \neq \mathbb{Q}$ and naive height $\leq X$,
grows as \( \gg X^{1/9} \) and \( \gg X^{1/12} \), respectively.
\end{abstract}

\section{Introduction}
Let \( E \) be an elliptic curve over \( \mathbb{Q} \) with torsion subgroup \( E(\mathbb{Q})_{\mathrm{tors}} \) of the Mordell–Weil group. Mazur famously showed that there are precisely 15 possible torsion subgroups for elliptic curves over \( \mathbb{Q} \), namely  
\begin{equation} \label{torsion groups}  
\begin{split}  
\mathbb{Z} / N \mathbb{Z} & \quad \text{for } 1 \leq N \leq 10 \text{ or } N = 12, \\  
\mathbb{Z} / 2 \mathbb{Z} \times \mathbb{Z} / N \mathbb{Z} & \quad \text{for } N = 2, 4, 6, 8.  
\end{split}  
\end{equation}  
Each of these groups arises infinitely often, as they correspond to the cases where the associated modular curves have genus \( 0 \). A natural question is how frequently these groups occur when elliptic curves are ordered by their \emph{naive height}. Every elliptic curve \( E \) over \( \mathbb{Q} \) admits a unique Weierstrass equation of the form  
\[
y^2 = x^3 + A x + B,  
\]  
where \( A, B \in \mathbb{Z} \) and \( \gcd(A^3, B^2) \) is not divisible by any twelfth power. The naive height of \( E \) is defined by  
\[
\op{ht}(E) := \max(4|A|^3, 27|B|^2).  
\]  
For any group \( G \) appearing in \eqref{torsion groups}, Harron and Snowden \cite{harronsnowden} showed that  
\[
\#\{E / \mathbb{Q} \mid \op{ht}(E) \leq X, \, E(\mathbb{Q})_{\mathrm{tors}} \simeq G\} \gg X^{1/d(G)},  
\]  
where \( d(G) > 0 \) is explicitly given in Table 1 of \emph{loc. cit.} Similar results were obtained by Cullinan, Kenny, and Voight \cite{CKV}. The case \( G = 0 \) follows from work of Duke \cite{Dukenoexceptional}, who showed that almost all elliptic curves have no \emph{exceptional primes}. 

\par Let $\rho_{E, n} : \operatorname{Gal}(\overline{\mathbb{Q}} / \mathbb{Q}) \rightarrow \operatorname{Aut}(E[n]) \xrightarrow{\sim} \operatorname{GL}_2( \mathbb{Z} / n \mathbb{Z})$
denote the Galois representation on the \( n \)-torsion of \( E \). The elliptic curve $E$ admits a cyclic $n$-isogeny precisely when (for a suitable choice of basis for $E[n]$) the image of $\rho_{E,n}$ consists of upper triangular matrices. Asymptotics for the number of elliptic curves for prescribed $n$-cyclic isogeny have been studied using the parametrization from the modular curve $Y_0(n)$, which has genus $0$ for $1\leq n \leq 10$ and $n=12,13,16,18,25$. The study of the asymptotic for \[\#\{E_{/\Q}\mid \rho_{E,n}\text{ is reducible and }H(E)\leq X\}\] as $X\rightarrow \infty$ has garnered significant interest.
For \( n = 3 \), an exact asymptotic formula is established by Pizzo--Pomerance--Voight \cite{PPV}, and for \( n = 4 \) by Pomerance--Schaefer \cite{PS}. For \( n = 7, 10, 13, \) and \( 25 \), Boggess--Sankar \cite{Sankar} obtain asymptotic lower bounds, while for \( n = 7 \), Molnar--Voight \cite{MV} obtain the precise asymptotic.

\par The action of \( \operatorname{Gal}(\overline{\mathbb{Q}} / \mathbb{Q}) \) on the Tate module \( T(E) = \varprojlim_n E[n] \) induces an adelic Galois representation  
\[
\widehat{\rho}_E : \operatorname{Gal}(\overline{\mathbb{Q}} / \mathbb{Q}) \rightarrow \operatorname{Aut}(T(E)) \cong \operatorname{GL}(2, \widehat{\mathbb{Z}}).
\]  
Understanding the image of \( \widehat{\rho}_E \) is a central problem in what is known as Mazur's Program B, following \cite{Mazurratpoints} which seeks to classify all Galois images for elliptic curves. As part of this program, it is natural to study \emph{entanglements} of division fields.  

\par Let \( a \) and \( b \) be positive integers with \( d = \gcd(a, b) \). It is easy to see that
\[
\mathbb{Q}(E[a]) \cap \mathbb{Q}(E[b]) \supseteq \mathbb{Q}(E[d]).  
\]  
An \((a, b)\)-entanglement occurs when this containment is strict, i.e.,  
\[
K := \mathbb{Q}(E[a]) \cap \mathbb{Q}(E[b]) \neq \mathbb{Q}(E[d]).  
\]  
An entanglement is said to be \emph{unexplained} if it cannot be explained by the Weil pairing on $E$, for further details, see Definition \ref{unexplained defn}. Such entanglements are mysterious and their properties are of significant interest. The \emph{type} of the entanglement is the isomorphism class of \( \operatorname{Gal}(K / \mathbb{Q}(E[d])) \). Given an unexplained entanglement \( \mathfrak{T} \), specified by a tuple $(a,b)$ and a group $G$, let \( N_{\mathfrak{T}}(X) \) denote the number of isomorphism classes of elliptic curves \( E_{/\mathbb{Q}} \) with \( \op{ht}(E) \leq X \) exhibiting \( \mathfrak{T} \) as an entanglement (i.e. $E$ has an $(a,b)$-entanglement of type $G$). Below is the main result of this article.
\begin{lthm}[Theorem \ref{(2,3) theorem section 4} and \ref{(2, 5) theorem section 4}]\label{thm a}
    With respect to the notation above, the following assertions hold:
    \begin{enumerate}
        \item \( N_{\left((2,3),\mathbb{Z}/2\mathbb{Z}\right)}(X) \gg X^{1/9} \);  
        \item \( N_{\left((2,5),\mathbb{Z}/2\mathbb{Z}\right)}(X) \gg X^{1/12}. \)  
    \end{enumerate}
\end{lthm}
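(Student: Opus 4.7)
The plan is, for each of the two entanglement types, to exhibit an explicit $1$-parameter family $\{E_t\}_{t \in \mathbb{Q}}$ of elliptic curves over $\mathbb{Q}$ whose generic member carries the prescribed unexplained entanglement, and then to count the distinct isomorphism classes of minimal Weierstrass models arising from rational specializations and quadratic twists of this family. The literature on entanglement fields identifies the moduli space of elliptic curves over $\mathbb{Q}$ with an unexplained $(2,3)$- (resp.\ $(2,5)$-) entanglement of type $\mathbb{Z}/2\mathbb{Z}$ as a genus-$0$ modular curve, which admits an explicit rational $j$-line parameterization. From this one extracts an integral Weierstrass model
\[ E_t : y^2 = x^3 + A(t) x + B(t), \qquad A(t), B(t) \in \mathbb{Z}[t], \]
so that every $E/\mathbb{Q}$ exhibiting the prescribed entanglement is isomorphic, up to quadratic twist, to some specialization $E_t$; equivalently, it is isomorphic to $E_{t,u} : y^2 = x^3 + u^2 A(t) x + u^3 B(t)$ for a squarefree $u \in \mathbb{Z}$ and $t \in \mathbb{Q}$.

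With $a = \deg A$ and $b = \deg B$, the explicit computation is expected to yield $\max(3a, 2b) = 9$ in the $(2,3)$-case and $\max(3a, 2b) = 12$ in the $(2,5)$-case, matching the target exponents. Writing $t = m/n$ with $\gcd(m,n) = 1$ and parameterizing twists by $u$, the naive height of the resulting integral Weierstrass model becomes comparable to a weighted-homogeneous polynomial expression in the integer variables $(m, n, u)$, and a standard geometry-of-numbers lattice-point estimate then produces a lower bound of the desired order $X^{1/9}$ (resp.\ $X^{1/12}$) on the number of such triples lying in the height region $\{\op{ht} \leq X\}$.

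The final step is a sieve that (i) restricts to triples yielding a minimal Weierstrass model, so that $\gcd(A_{t,u}^3, B_{t,u}^2)$ has no $12$th-power divisor, and (ii) ensures that distinct valid triples produce non-isomorphic curves, via a fiber analysis of the $j$-map on the modular curve. Inclusion--exclusion at each prime $p$ yields a positive Euler-product density for the minimality condition, so the lattice-point main term survives up to a constant factor. The main obstacle will be handling the sieve at the primes intrinsic to the entanglement (namely $2,3$ in the first case and $2,5$ in the second), where the local geometry of the modular parameterization can force excess $p$-adic divisibility in the Weierstrass coefficients and where the entanglement condition itself must be preserved; since only a lower bound is sought, it suffices to impose explicit local congruence conditions forcing minimality on a positive-density subset, and to confirm from the construction of $E_t$ that the unexplained entanglement persists generically on this sieved subfamily rather than collapsing to an explained one.
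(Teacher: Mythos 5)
Your overall route is the one the paper takes: pull an explicit one-parameter family $y^2=x^3+f(t)x+g(t)$ with the prescribed unexplained entanglement from the Daniels--Morrow classification, homogenize $t=m/n$, count coprime lattice points $(m,n)$ in the height-$\leq X$ region by a Davenport-type estimate, and sieve for global minimality; the exponents $1/9$ and $1/12$ arise as $2/d$ with $d=3\deg A=2\deg B$ equal to $18$ and $24$ for the homogenized pairs (your bookkeeping ``$\max(3a,2b)=9$'' is off --- the degrees are $18$, and the $2$ in the numerator comes from the two lattice variables).

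The one step that would fail as written is the inclusion of the quadratic twist parameter $u$. You assert that every curve with the entanglement is a twist $E_{t,u}:y^2=x^3+u^2A(t)x+u^3B(t)$ of a member of the family and then propose to count over triples $(m,n,u)$. But quadratic twisting preserves $\mathbb{Q}(E[2])$ while changing $\mathbb{Q}(E[3])$ (resp.\ $\mathbb{Q}(E[5])$), so for non-square $u$ the twist $E_{t,u}$ generally does \emph{not} retain the unexplained $(2,3)$- or $(2,5)$-entanglement; counting over all $u$ would count curves without the property. The paper deliberately restricts to $u=1$, which is exactly why it obtains only a lower bound, and your fallback of ``restricting to a sieved subfamily where the entanglement persists'' in effect collapses to that choice --- you should make it explicit rather than leave the twist count in the main term. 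Separately, two points you treat as routine are where the actual work lies: (i) the geometry-of-numbers region is bounded only because $A$ and $B$ have no common real root (equivalently $c(t)=\gcd(f,g)$ has no real roots), a hypothesis that fails for other Daniels--Morrow families and must be checked; and (ii) the minimality sieve is not the standard Harron--Snowden computation precisely because $\gcd(A,B)=C$ is a nonconstant quartic, so the local condition $\ell^4\mid A,\ \ell^6\mid B$ reduces to $\ell^4\mid C(a,b)$ and the tail estimate hinges on $C$ being squarefree of degree $\leq 4$ together with a Hensel-lifting count. Neither issue is fatal for a lower bound, but both need to be addressed for the argument to close.
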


\par We now provide a brief yet comprehensive overview of the method of proof. Building on the results of \cite{danielsmorrow}, we exploit the existence of explicit families of elliptic curves of the form \( y^2 = x^3 + f(t) x + g(t) \) with prescribed unexplained entanglement. Let \( \mathcal{F}_1 \) denote the family of elliptic curves for which:
\[
\begin{aligned}
f(t) &:= (-3t^2 - 36)c(t), \\
g(t) &:= (2t^5 + 72t^3 + 864t)c(t), \\
c(t) &:= \gcd(f(t), g(t)) = t^4 + 36t^2 + 432,  
\end{aligned}
\]  
and let \( \mathcal{F}_2 \) denote the family defined by: 
\[
\begin{aligned}
f(t) &:= (-3 t^{4} - 30 t^{2} - 15)c(t), \\
g(t) &:= 2(t^{4} + 4 t^{2} - 1)(t^{4} + 22 t^{2} + 125)c(t), \\
c(t) &:= \gcd(f(t), g(t)) = t^{4} + 22 t^{2} + 125.
\end{aligned}
\]  
Setting \( \mathfrak{T}_1 := ((2,3), \mathbb{Z}/2\mathbb{Z}) \) and \( \mathfrak{T}_2 := ((2,5), \mathbb{Z}/2\mathbb{Z}) \), the family \( \mathcal{F}_i \) consists of elliptic curves exhibiting the entanglement \( \mathfrak{T}_i \). Let \( A(u, v) \), \( B(u, v) \), and \( C(u, v) \) be nonconstant homogeneous polynomials in \( \mathbb{Z}[u, v] \).  
\begin{ass}\label{main ass} We assume that: 
\begin{enumerate}
    \item[(i)] The degrees of \( A \) and \( B \) are related by the equality \( 3\deg A = 2\deg B \). We set \( d := 3\deg A = 2\deg B \).  
    \item[(ii)] The polynomials \( A(u, v) \) and \( B(u, v) \) have no common real roots in \( \mathbb{P}^1(\mathbb{C}) \).  
    \item[(iii)] The discriminant is non-vanishing, i.e., \( 4A^3 + 27B^2 \neq 0 \).  
    \item[(iv)] The greatest common divisor of \( A(u, v) \) and \( B(u, v) \), denoted by \( C(u, v) := \gcd(A, B) \), is a non-constant, squarefree polynomial that is not divisible by \( u \) or \( v \). Moreover, its degree satisfies \( r := \deg(C) \leq 4 \).  
\end{enumerate}
\end{ass}  
\par In our applications, the polynomials \( A \), \( B \), and \( C \) will be the homogenizations of \( f \), \( g \), and \( c \), respectively. Consider the family of elliptic curves with minimal Weierstrass model given by \( \mathcal{E}_{a,b} := E_{A(a,b), B(a,b)} \). Defining  
\[
\mathcal{C}(X) = \left\{ \mathcal{E}_{a, b} \mid \mathcal{E}_{a, b} \text{ is minimal and } \op{ht}(\mathcal{E}_{a, b}) \leq X \right\},
\]  
we establish the following general result.  
\begin{lthm}[Theorem \ref{main thm section 3}]\label{thm b}
     With respect to the notation above, we have  
   \[
   \# \mathcal{C}(X) \sim \prod_\ell \mathfrak{d}_\ell \operatorname{Area}(\mathcal{R}) X^{2/d},
   \]  
   where \( \mathcal{R} \) is the bounded region defined by  
   \[
   \mathcal{R} := \{(u, v) \in \mathbb{R}^2 \mid v > 0, 4|A(u,v)|^3\leq 1\;\text{and }\;27|B(u,v)|^2 \leq 1\},
   \]  
   and \( \mathfrak{d}_\ell \in (0,1] \) is an explicit constant satisfying 
   \[
   \mathfrak{d}_\ell \geq \left(1 - \frac{1}{\ell^2} - \frac{(r + \ell)}{\ell^8}\right)
   \]  
   for all but finitely many primes.
\end{lthm}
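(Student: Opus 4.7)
The plan is to combine a geometry-of-numbers lattice point count in a scaled region with a sieve for minimality of the Weierstrass model, in the spirit of the Ekedahl sieve and the Bhargava--Shankar framework for one-parameter families of elliptic curves.

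First, I would set up the unsieved main term. By Assumption (i), both $4A^3$ and $27B^2$ are homogeneous of degree $d$, so the region $\mathcal{R}_X := \{(u,v) : v > 0,\ 4|A(u,v)|^3 \leq X,\ 27|B(u,v)|^2 \leq X\}$ is exactly the dilation $X^{1/d} \cdot \mathcal{R}$. Assumption (ii) guarantees $\mathcal{R}$ is bounded: otherwise, taking any sequence $(u_n, v_n) \in \mathcal{R}$ with $\|(u_n, v_n)\| \to \infty$ and rescaling to the unit circle would produce a limit point in $\mathbb{P}^1(\mathbb{R})$ on which both $A$ and $B$ vanish, contradicting the hypothesis. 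Since $\partial \mathcal{R}$ is a semi-algebraic curve with finitely many components, Davenport's lemma applies and yields $\#(\mathcal{R}_X \cap \mathbb{Z}^2) = \operatorname{Area}(\mathcal{R}) X^{2/d} + O(X^{1/d})$.

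Second, I would perform the sieve for minimality. The pair $(a,b)$ gives a minimal Weierstrass model exactly when no prime $\ell$ satisfies both $\ell^4 \mid A(a,b)$ and $\ell^6 \mid B(a,b)$. A key observation: on $\mathcal{R}_X$ one has $|A(a,b)| \leq (X/4)^{1/3}$, so any such $\ell$ must satisfy $\ell \leq (X/4)^{1/12}$, giving a natural upper cutoff for the sieve. For each prime $\ell$, define $\mathfrak{d}_\ell$ as the Haar density on $\mathbb{Z}_\ell^2$ of $\ell$-minimal pairs. Using the factorization $A = C A_0$, $B = C B_0$ with $\gcd(A_0, B_0) = 1$ from Assumption (iv), I would split the analysis into cases by $k := v_\ell(C(a,b))$. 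For primes $\ell$ outside a finite exceptional set (those dividing $\operatorname{Res}(A_0, B_0)$, $\operatorname{disc}(C)$, or the leading coefficients of $A$ and $B$), the case $k = 0$ cannot contribute to non-minimality by Bezout; the case $k \geq 2$ contributes density $O(1/\ell^2)$ by squarefreeness of $C$ and its $r$ reduced linear factors modulo $\ell$; and $k = 1$ demands additionally $\ell^3 \mid A_0(a,b)$ and $\ell^5 \mid B_0(a,b)$, which a direct computation bounds by $O((r + \ell)/\ell^8)$. Summing over cases yields the claimed lower bound on $\mathfrak{d}_\ell$.

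Finally, I would assemble the asymptotic via an inclusion-exclusion sieve over primes $\ell \leq (X/4)^{1/12}$: the expected main term is $\operatorname{Area}(\mathcal{R}) \prod_\ell \mathfrak{d}_\ell X^{2/d}$, and the Euler product converges absolutely thanks to the tail bound $1 - \mathfrak{d}_\ell = O(1/\ell^2)$. The principal obstacle is uniform control of the sieve error across medium-range primes: one must show that the number of $(a,b) \in \mathcal{R}_X \cap \mathbb{Z}^2$ failing minimality at some prime $\ell$ with $M \leq \ell \leq (X/4)^{1/12}$ is $o(X^{2/d})$ as $M \to \infty$, uniformly in $X$. This is the standard Ekedahl-type estimate: reducing modulo $\ell^{12}$ and using that the locus $\{\ell^4 \mid A,\ \ell^6 \mid B\}$ cuts out a scheme of codimension at least $2$ for all large $\ell$, each prime contributes $O(X^{2/d}/\ell^{2})$ plus boundary error of size $O(X^{1/d}/\ell + 1)$; summation over $\ell \geq M$ then produces the required negligible $o(X^{2/d})$ total, completing the proof.
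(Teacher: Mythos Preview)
Your overall architecture—Davenport's lemma for the unsieved count, local densities $\mathfrak{d}_\ell$, and a tail sieve—matches the paper's. The gap is in the tail estimate. Your claimed per-prime boundary error $O(X^{1/d}/\ell + 1)$ is only valid for the sublattice piece $\ell \mid a,\ \ell \mid b$. For the remaining piece—coprime pairs with $\ell^4 \mid A$ and $\ell^6 \mid B$—your ``codimension $\geq 2$'' heuristic is misleading: over $\mathbb{F}_\ell$ the locus $\{A \equiv B \equiv 0\}$ is $V(C)$, which has codimension~$1$, and the saving comes only from the depth of the congruence $\ell^4 \mid C$. Counting such pairs by fibering (fix $a$, then $b$ lies in at most $r$ classes mod~$\ell^4$) gives a per-prime error of order $X^{1/d}$, not $O(1)$. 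Summed over your range $M \leq \ell \leq X^{1/12}$ this yields $X^{1/d}\,\pi(X^{1/12})$, which for $d = 18$ is $\asymp X^{5/36}/\log X$ and swamps the main term $X^{2/d} = X^{4/36}$.

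The paper closes the sieve with an ingredient you never invoke. Because $C$ has no real zeros (a consequence of Assumption~(ii)), homogeneity gives $H(a,b) \asymp C(a,b)^{d/r}$, hence $C(a,b) \ll X^{r/d}$ on $\mathcal{R}(X)$; so any prime with $\ell^4 \mid C(a,b) \neq 0$ satisfies $\ell \ll X^{r/(4d)}$. Now the hypothesis $r \leq 4$ forces this cutoff to be at most $X^{1/d}$, and over that shorter range the fibered error $\sum_{\ell \ll X^{1/d}} O(X^{1/d}) = O(X^{2/d}/\log X)$ is genuinely $o(X^{2/d})$. This is exactly where Assumptions~(ii) and~(iv) enter; without them the Ekedahl heuristic alone does not suffice. (A smaller issue: your local-density bookkeeping is also off. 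The $1/\ell^2$ in $1 - \mathfrak{d}_\ell$ comes from the coprimality condition $\ell \mid \gcd(a,b)$, which is part of the definition of $\mathcal{C}$ and which you omit, while the $(r+\ell)/\ell^8$ comes from coprime pairs with $\ell^4 \mid C$; your case $k=1$ with $\ell^3 \mid A_0$ and $\ell^5 \mid B_0$ is in fact empty for good~$\ell$.)
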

The product $\prod_{\ell} \mathfrak{d}_\ell$ is non-vanishing via a straightforward argument, see Remark \ref{non-vanishing of product}. The proof of this result combines analytic techniques, sieve methods, and an application of Davenport's lemma from the geometry of numbers. The overall approach differs significantly from those in previous works, although it bears some resemblance to the methods of Harron--Snowden and Molnar--Voight. The key distinction between our technique and that of Harron and Snowden lies in the fact that, in our setting, the polynomials \( f(t) \) and \( g(t) \) share a non-trivial common factor. In contrast, the method of Molnar and Voight applies only to a more restricted situation and does not immediately extend to the generality of Theorem B. Moreover, the geometry of numbers framework we adopt is considerably more flexible, as it could naturally adapt to settings involving several variables and higher-dimensional parameter spaces, whereas analytic techniques relying on iterated sums and sieve-theoretic estimates would soon become much more complicated.

\par Theorem \ref{thm a} is then deduced by applying Theorem \ref{thm b} to the families of elliptic curves \( \mathcal{F}_1 \) and \( \mathcal{F}_2 \). However, our approach does not yield a precise asymptotic formula, as the families \( \mathcal{F}_i \) do not encompass all elliptic curves \( E_{/\mathbb{Q}} \) with entanglement type \( \mathfrak{T}_i \). This incompleteness arises from the fact that the parametrization in terms of \( \mathcal{F}_1 \) and \( \mathcal{F}_2 \) excludes certain subfamilies that arise from quadratic twists. Thus, we are not counting all $\Q$-isomorphism classes of elliptic curves with $j$-invariants given in \cite{danielsmorrow}.

\par A crucial aspect of our method is the assumption that the polynomial \( c(t) \) has no real roots. This condition is essential in the application of Davenport's lemma, as it guarantees that the region \( \mathcal{R} \) appearing in the geometry-of-numbers argument remains bounded. When \( c(t) \) admits real roots, the region becomes unbounded, rendering the application of Davenport's lemma ineffective. Consequently, this restriction prevents us from extending our strategy to certain other entanglement families, such as those arising in the parametrization of Morrow and Daniels, where the geometry of the corresponding regions is more complicated. 

\par Nevertheless, the results of this article offer new insights into the arithmetic statistics of elliptic curves, and we expect that they will stimulate further investigation into related families and their distribution. In particular, it would be interesting to determine whether a modification of our sieve-theoretic framework could be adapted to cover the missing families, potentially leading to a more comprehensive asymptotic count for entangled elliptic curves.

\section*{Statements and Declarations}

\subsection*{Conflict of interest} The authors have no conflicts of interests to declare.

\subsection*{Data Availability} There is no data associated to the results of this manuscript.

\section{Galois images and entanglements}

\par Throughout this article, we shall fix an algebraic closure $\overline{\Q}$ of $\Q$ and let $\op{G}_{\Q}$ denote the absolute Galois group $\op{Gal}(\overline{\Q}/\Q)$. Let $E:y^2=x^3+Ax+B$ be an elliptic curve defined over $\Q$. Given $n\in \Z_{\geq 1}$, let $\rho_{E, n}:\op{G}_{\Q}\rightarrow \op{GL}_2(\Z/n\Z)$ be the Galois representation associated to $E[n]$. Denote by $G_{E,n}$ the image of $\rho_{E,n}$ and $\Q(E[n]):=\overline{\Q}^{\rho_{E,n}}$. The field $\Q(E[n])$ is called the $n$-division field of $E$. The map $\rho_{E,n}$ gives us a natural identification of $\op{Gal}(\Q(E[n])/\Q)$ with the Galois image $G_{E,n}$. Let $\chi_n$ be the mod-$n$ cyclotomic character. As a consequence of the Weil--pairing, $\det \rho_{E,n}=\chi_n$ and therefore, $\Q(\mu_n)$ is contained in $\Q(E[n])$. If $E$ is a non-CM elliptic curve then Serre's open image theorem \cite{Serre} asserts that the index $d_E:=[\op{GL}_2(\widehat{\Z}): \op{image}\widehat{\rho}_E]$ is finite. Serre moreover showed that $d_E$ is always divisible by $2$ and that, in fact, the image of $\widehat{\rho}_E$ is contained in a prescribed index-two subgroup $H_E$ of $\op{GL}_2(\widehat{\Z})$. Elliptic curves for which $d_E=2$ are called Serre curves. Building upon previous work of Duke \cite{Dukenoexceptional}, Jones \cite{Jones} showed that when ordered by \emph{height}, most elliptic curves $E_{/\Q}$ are Serre curves. Serre curves have the property that for every prime $\ell$, the Galois representation $\rho_{E, \ell} : \op{G}_{\Q}\rightarrow \op{GL}_2(\Z/\ell\Z)$ is surjective.

\par The image of \(\widehat{\rho}_E\) could be strictly smaller than \(H_E\) in various ways. The first arises from the existence of an \emph{exceptional} prime, i.e., a prime \(\ell\) for which the Galois representation \(\rho_{E, \ell}\) is not surjective. For non-exceptional primes \(\ell\), the Galois group \(\op{Gal}(\Q(E[\ell])/\Q)\) is isomorphic to \(\op{GL}_2(\Z/\ell\Z)\). A conjecture of Serre \cite[p. 399]{Serreconjpaper} and Zywina \cite{zywina2015possible} predicts that if \(E\) is a non-CM elliptic curve over \(\Q\), then there are no exceptional primes \(\ell > 37\). In fact, Zywina provides a more refined conjecture \cite[Conjecture 1.1]{ZywinaBLMS} describing precisely when \(\rho_{E,\ell}\) is surjective, along with a practical algorithm for computing the image. Another obstruction to surjectivity arises from the presence of an entanglement, as defined in the introduction. \par These entanglements are further subdivided into two categories: explained and unexplained. Let us illustrate the difference through an example. Let \(\Delta_E\) denote the minimal discriminant of \(E\), and assume that \(\Delta_E\) is not a square. It is straightforward to verify that \(\mathbb{Q}(\sqrt{\Delta_E}) \subseteq \mathbb{Q}(E[2])\). Now, choose \( m > 2 \) such that \(\mathbb{Q}(\sqrt{\Delta_E})\) is contained in \(\mathbb{Q}(\mu_m)\). Since \(\mathbb{Q}(\mu_m)\) is itself contained in \(\mathbb{Q}(E[m])\), it follows that  
\[
\mathbb{Q}(\sqrt{\Delta_E})\subseteq \mathbb{Q}(E[2]) \cap \mathbb{Q}(E[m]) .
\]
If the minimal such \( m \) is odd, we obtain a \((2, m)\)-entanglement of type \(\mathbb{Z}/2\mathbb{Z}\). When \( m \) is even, one instead finds either a \((4, m/4)\)-entanglement of type \(\mathbb{Z}/2\mathbb{Z}\), arising from  
\[
\mathbb{Q}\left(\sqrt{-\Delta_E}\right) \subseteq \mathbb{Q}(E[4]) \cap \mathbb{Q}(E[m/4]),
\]  
or an \((8, m/8)\)-entanglement of type \(\mathbb{Z}/2\mathbb{Z}\), arising from  
\[
\mathbb{Q}\left(\sqrt{\pm 2 \Delta_E}\right) \subseteq \mathbb{Q}(E[8]) \cap \mathbb{Q}(E[m/8]).
\]  
These are referred to as \emph{explained entanglements}, following Daniels and Morrow \cite{danielsmorrow}. Let $E: y^{2}=x^{3}-x^{2}-1033 x-12438
$ with Cremona label \href{https://www.lmfdb.org/EllipticCurve/Q/100a3/}{100a3}. Then \cite[Example 1.3 on p.~829]{danielsmorrow} shows that $\Q(E[3])\cap \Q(E[2])=\Q(\sqrt{5})$. In this case, there is an entanglement $((2,3), \Z/2\Z)$ which cannot be explained by the Weil pairing or the Kronecker--Weber theorem. Such entanglements are more interesting and called \emph{unexplained entanglements}. We give a formal definition later in this section. 
\begin{definition}\label{defn admissible}A subgroup \( G \) of \( \mathrm{GL}_2(\mathbb{Z} / N \mathbb{Z}) \) is said to be \emph{admissible} if it satisfies the following conditions: 
\begin{enumerate}
    \item[(a)] \( G \) is a proper subgroup of \( \mathrm{GL}_2(\mathbb{Z} / N \mathbb{Z}) \), i.e., \( G \neq \mathrm{GL}_2(\mathbb{Z} / N \mathbb{Z}) \),  
    \item[(b)] The determinant map induces an equality \( \operatorname{det}(G) = (\mathbb{Z} / N \mathbb{Z})^{\times} \),  
    \item[(c)] \( G \) contains an element with trace \( 0 \) and determinant \( -1 \) that fixes a point in \( (\mathbb{Z} / N \mathbb{Z})^2 \) of exact order \( N \).
\end{enumerate}  
\end{definition}
Let \( E \) be an elliptic curve over \( \mathbb{Q} \) the mod-\( N \) Galois representation is not surjective. Then \( \rho_{E, N}(G_{\mathbb{Q}}) \) is an admissible subgroup of \( \mathrm{GL}_2(\mathbb{Z} / N \mathbb{Z}) \), see \cite[Proposition 2.2]{zywina2015possible} for further details. Let \( G \) be a subgroup of \( \mathrm{GL}_{2}(\mathbb{Z} / n \mathbb{Z}) \) for some \( n \geq 2 \) whose determinant map is surjective. Consider two divisors \( a \) and \( b \) of \( n \) with \( a < b \), and define \( c := \operatorname{lcm}(a, b) \) and \( d := \operatorname{gcd}(a, b) \). Let \( \pi_c: \mathrm{GL}_{2}(\mathbb{Z} / n \mathbb{Z}) \to \mathrm{GL}_{2}(\mathbb{Z} / c \mathbb{Z}) \) be the canonical reduction map, and set \( G_c = \pi_c(G) \). We then introduce the following reduction maps and associated normal subgroups of \( G_c \):  
\[
\begin{array}{ll}
\pi_a: \mathrm{GL}_{2}(\mathbb{Z} / c \mathbb{Z}) \to \mathrm{GL}_{2}(\mathbb{Z} / a \mathbb{Z}), & N_a = \ker(\pi_a) \cap G_c, \\
\pi_b: \mathrm{GL}_{2}(\mathbb{Z} / c \mathbb{Z}) \to \mathrm{GL}_{2}(\mathbb{Z} / b \mathbb{Z}), & N_b = \ker(\pi_b) \cap G_c, \\
\pi_d: \mathrm{GL}_{2}(\mathbb{Z} / c \mathbb{Z}) \to \mathrm{GL}_{2}(\mathbb{Z} / d \mathbb{Z}), & N_d = \ker(\pi_d) \cap G_c.
\end{array}
\]
\begin{definition}\label{defn of entanglement} With respect to notation above, we say that $G$ represents an $(a, b)$-entanglement if
$
\left\langle N_{a}, N_{b}\right\rangle \subsetneq N_{d} .
$ The type of the entanglement is the isomorphism type of the group $N_{d} /\left\langle N_{a}, N_{b}\right\rangle$.
\end{definition}
Consider the set  
\[
\mathcal{T}_{G} = \{((a, b), H) \mid G \text{ represents an } (a, b) \text{-entanglement of type } H\}
\]  
and declare that
$\left(\left(a_{1}, b_{1}\right), H_{1}\right) \leqslant \left(\left(a_{2}, b_{2}\right), H_{2}\right)$
if one of the following conditions hold. 
\begin{enumerate}
\item[(a)] The groups \( H_1 \) and \( H_2 \) are isomorphic, and either \( a_2 \) divides \( a_1 \) and \( b_2 \) divides \( b_1 \), or \( b_2 \) divides \( a_1 \) and \( a_2 \) divides \( b_1 \).  
\item[(b)] The group \( H_1 \) is isomorphic to a quotient of \( H_2 \), and either \( a_1 \) divides \( a_2 \) and \( b_1 \) divides \( b_2 \), or \( b_1 \) divides \( a_2 \) and \( a_1 \) divides \( b_2 \).  
\end{enumerate}
One says that \( G \) represents a \emph{primitive} \( (a, b) \)-entanglement of type \( H \) if \( ((a, b), H) \) is the unique maximal element of \( \mathcal{T}_{G} \) and \( n = \operatorname{lcm}(a, b) \).

\begin{definition}\label{unexplained defn}
    A group \( G \) exhibits an explained \( (a,b) \)-entanglement of type \( T \) if it satisfies the conditions for a primitive \( (a,b) \)-entanglement of type \( T \) and the equality  
\[
\left[(\mathbb{Z} / c \mathbb{Z})^{\times}: \operatorname{det}\left(N_{a, b}(G)\right)\right] = \left[G_{c}: N_{a, b}(G)\right]
\]  
holds. Equivalently, this occurs when the kernel \( N \) of \( \operatorname{det}: G_c \to (\mathbb{Z} / c \mathbb{Z})^{\times} \) is contained in \( \langle N_a, N_b \rangle \).  

In contrast, \( G \) exhibits an unexplained \( (a,b) \)-entanglement of type \( T \) if it satisfies the conditions for a primitive \( (a,b) \)-entanglement of type \( T \) but the above equality fails.
\end{definition}

\begin{theorem}{\cite[Theorem A]{danielsmorrow}}  
There exist precisely nine pairs \(((p, q), T)\), where \( p \) and \( q \) are distinct prime numbers with \( p < q \), and \( T \) is a finite group, such that there are infinitely many elliptic curves over \(\mathbb{Q}\) that are pairwise non-isomorphic, and exhibit an \emph{unexplained} \((p, q)\)-entanglement of type \(T\).  
\end{theorem}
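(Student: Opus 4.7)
The plan is to translate the problem into a group-theoretic classification and then into a finiteness question on rational points of modular curves. Fix primes $p < q$ and a finite group $T$. By Definitions~\ref{defn of entanglement} and~\ref{unexplained defn}, an elliptic curve $E_{/\Q}$ exhibits an unexplained $(p,q)$-entanglement of type $T$ precisely when the image $\rho_{E,pq}(\op{G}_{\Q})$ is contained, up to conjugation, in some admissible subgroup $G \subseteq \op{GL}_2(\Z/pq\Z)$ representing a primitive unexplained $(p,q)$-entanglement of type $T$. Hence the task reduces to determining all triples $((p,q),T,G)$ for which there exist infinitely many pairwise non-isomorphic non-CM elliptic curves $E_{/\Q}$ with $\rho_{E,pq}(\op{G}_{\Q})$ conjugate into $G$.

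To each candidate $G$ I would associate the modular curve $X_G$ whose non-cuspidal $\Q$-rational points parametrize elliptic curves with mod-$pq$ Galois image in $G$. Having infinitely many non-isomorphic non-CM such $E_{/\Q}$ is then equivalent to $X_G$ admitting infinitely many non-cuspidal, non-CM $\Q$-points. By Faltings' theorem this forces $g(X_G) \leq 1$, and in the genus-$1$ case it further forces positive Mordell--Weil rank for the Jacobian of $X_G$ over $\Q$. So one must classify admissible subgroups $G \subseteq \op{GL}_2(\Z/pq\Z)$ exhibiting an unexplained $(p,q)$-entanglement of type $T$ such that $X_G$ satisfies one of these two conditions.

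To enumerate such $G$, one first bounds the primes $p,q$: known results on rational points on modular curves of prime level determine the image of $\rho_{E,\ell}$ up to finitely many exceptions once $\ell$ is large, restricting the relevant primes to a short explicit list. For each surviving pair $(p,q)$, Goursat's lemma expresses admissible subgroups of $\op{GL}_2(\Z/pq\Z)$ in terms of subgroups $G_p \subseteq \op{GL}_2(\F_p)$ and $G_q \subseteq \op{GL}_2(\F_q)$ together with a shared quotient; an unexplained $(p,q)$-entanglement of type $T$ corresponds precisely to such a common quotient isomorphic to $T$ that is not accounted for by the determinant. For each candidate, one computes $g(X_G)$ via Riemann--Hurwitz applied to $X_G \to X(1)$ (or by consulting tables of modular curves), and the Mordell--Weil rank whenever $g(X_G)=1$, discarding any case that fails.

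The main obstacle will be carrying out this enumeration exhaustively. One must verify completeness of the Goursat list, perform the genus and rank computations correctly for every candidate, and most delicately, check in each surviving case via an explicit description of the $j$-map that the infinitely many rational points on $X_G$ actually produce infinitely many distinct non-CM $j$-invariants, rather than collapsing onto a single $j$-invariant or lying entirely in the CM locus. Assembling these verifications should yield precisely nine surviving triples and hence the claimed count.
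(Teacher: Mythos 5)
The paper does not prove this statement at all: it is quoted verbatim as \cite[Theorem A]{danielsmorrow} and used as a black box to supply the explicit one-parameter families $\mathcal{F}_1$ and $\mathcal{F}_2$. So there is no internal proof to compare against; the relevant comparison is with the strategy of Daniels--Morrow, and your outline does correctly describe that strategy at a high level (Goursat-type decomposition of admissible subgroups of $\op{GL}_2(\Z/pq\Z)$, passage to the modular curves $X_G$, Faltings plus genus and rank computations, and a check that the $j$-map is non-constant and avoids the CM locus).

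That said, what you have written is a programme, not a proof, and the gap sits exactly where the theorem's content lies: the assertion is that \emph{precisely nine} pairs $((p,q),T)$ occur, and nothing in your argument produces the number nine or certifies that the enumeration terminates with no cases overlooked. Two steps in particular are asserted rather than established. First, the reduction to ``a short explicit list'' of primes: if both $\rho_{E,p}$ and $\rho_{E,q}$ are surjective, one must analyze the possible common quotients of $\op{GL}_2(\F_p)$ and $\op{GL}_2(\F_q)$ that do not factor through the determinant, and when one image is non-surjective one needs the classification of such images (Serre, Zywina) to bound the primes; this is real work, not a citation-free observation. Second, the passage from ``$X_G(\Q)$ is infinite'' to ``infinitely many non-isomorphic $E_{/\Q}$ exhibit a \emph{primitive unexplained} $(p,q)$-entanglement of type exactly $T$'' requires controlling the curves whose mod-$pq$ image is strictly smaller than $G$ (for which the entanglement type could be larger or of a different kind), i.e.\ a Hilbert-irreducibility or thin-set argument on top of the point count; you flag the $j$-invariant and CM issues but not this one. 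Without carrying out the exhaustive subgroup enumeration and these verifications, the claim ``precisely nine'' is not reachable, which is presumably why the present paper simply imports the result from \cite{danielsmorrow} rather than reproving it.
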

For further details we refer to \cite[Appendix A]{danielsmorrow}.
\section{Density results}
\par Let $E_{/\Q}$ be an elliptic curve defined by a Weierstrass equation in \emph{short form}:
\[E: y^2 = x^3 + Ax + B,\] where $A, B\in \Z$. The \emph{height} of its Weierstrass model is defined by $\op{H}(A,B) := \max\left\{ |4A^3|,|27B^2|\right\}$.

\begin{definition}\label{defn of minimality defect}
    The \emph{minimality defect} $\op{md}(A,B)$ is the largest \( d \in \mathbb{Z}_{>0} \) such that \( d^4 \mid A \) and \( d^6 \mid B \), and the \emph{height} of \( E \) is  
\[
\op{ht}(E) = \frac{H(A,B)}{\op{md}(A,B)^{12}}.
\]  
\end{definition}
Each elliptic curve has a unique \emph{globally minimal model}, i.e., a model for which $\op{md}(A, B)=1$.

\par Choose non-constant homogeneous polynomials \( A(u,v), B(u,v) \in \mathbb{Z}[u,v] \) satisfying the conditions of Assumption \ref{main ass}. Given a homogeneous polynomial \( F(u,v) \in \mathbb{Z}[u,v] \), we write it in the form  
\[
F(u,v) = \alpha \prod_{i=1}^t (u - \alpha_i v)^{n_i}
\]  
as a product of linear factors, where \( \alpha \in \mathbb{Z}_{\geq 1} \) and \( \alpha_i \in \mathbb{P}^1(\mathbb{C}) \). By convention, we interpret \( (u - \infty v) \) as \( v \).\\
 According to condition (ii) above, $C(u,v)$ has no roots in $\mathbb{P}^1(\mathbb{R})$. Assume therefore upon replacing $C(u,v)$ with $-C(u,v)$ if necessary that $C(u,v)\geq 0$ for all $(u,v)\in \mathbb{R}^2$. We write $A(u, v)=C(u,v) A_0(u,v)$ and $B(u,v)=C(u, v) B_0(u,v)$.\\


\par Let \(\cF\) denote the set of all pairs of integers \((a, b)\) with \(b > 0\). For \((a, b) \in \cF\), define
\[
H(a, b) := H(A(a, b), B(a, b)) \quad \text{and} \quad m(a, b) := \op{md}(A(a, b), B(a, b)).
\]

Let \(\cC \subseteq \cF\) denote the subset consisting of pairs \((a, b)\) satisfying:
\begin{enumerate}
    \item \(\gcd(a, b) = 1\),
    \item \(4A(a, b)^3 + 27B(a, b)^2 \neq 0\),
    \item \(m(a, b) = 1\).
\end{enumerate}

For each \((a, b) \in \cC\), define the elliptic curve
\[
\cE_{a, b} = E_{A(a, b), B(a, b)} : y^2 = x^3 + A(a, b) x + B(a, b).
\]
Note that
\[
H(a, b) = H(\cE_{a, b}) \quad \text{and} \quad m(a, b) = \op{md}(\cE_{a, b}) = 1.
\]
Since \(m(a, b) = 1\), the curve \(\cE_{a, b}\) is globally minimal.

Given a subset \(\mathcal{S} \subseteq \cF\) and a positive real number \(X\), define
\[
\mathcal{S}(X) := \{(a, b) \in \mathcal{S} \mid H(a, b) \leq X\}.
\]
In particular, the set \(\cC(X)\) is identified with the set of globally minimal elliptic curves of the form $\cE_{a,b}=E_{A(a,b), B(a,b)}$ of height at most \(X\), namely
\[
\cC(X) = \{\cE_{a, b} \mid \cE_{a, b} \text{ is minimal and } H(\cE_{a, b}) \leq X\}.
\]
Let $\mathcal{R}(X)$ denote the region $\mathcal{R}(X):=\{(u, v)\in \mathbb{R}^2\mid v> 0, H(u,v)\leq X\}$,
and set $\mathcal{R}:=\mathcal{R}(1)$. 
\begin{lemma}\label{bounded area lemma}
    With respect to notation above, the set $\mathcal{R}(X)$ is compact and $\op{Area}(\mathcal{R}(X))=X^{2/d} \op{Area}(\mathcal{R})$.
\end{lemma}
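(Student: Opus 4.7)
The plan is to combine the homogeneity of $H(u,v)$ under uniform scaling with assumption (ii) to control both the shape and the size of $\mathcal{R}(X)$. First I would verify that $H$ is homogeneous of degree $d$ in $(u,v)$: since $A$ is homogeneous of degree $d/3$ and $B$ of degree $d/2$, an immediate computation gives $H(\lambda u, \lambda v) = \lambda^d H(u,v)$ for every $\lambda > 0$. Taking $\lambda = X^{1/d}$ then shows $(u,v) \in \mathcal{R}(X) \iff (X^{-1/d}u, X^{-1/d}v) \in \mathcal{R}$, so $\mathcal{R}(X)$ is exactly the image of $\mathcal{R}$ under the uniform dilation by $X^{1/d}$. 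The area formula $\op{Area}(\mathcal{R}(X)) = X^{2/d}\op{Area}(\mathcal{R})$ is then immediate from the standard change of variables in $\mathbb{R}^2$.

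For the compactness claim, closedness is essentially automatic from the continuity of $H$, and the strict inequality $v > 0$ in the definition is harmless for the area statement: passing to the closure adds at most a segment along $v=0$, which has measure zero. The substantive step is boundedness, and my plan is to turn assumption (ii) into a quantitative lower bound on the degree-$d$ homogeneous continuous function
\[
\Phi(u,v) := 4|A(u,v)|^3 + 27|B(u,v)|^2.
\]
Because $A$ and $B$ share no common real zero in $\mathbb{P}^1(\mathbb{C})$, the function $\Phi$ is strictly positive on the unit circle $S^1 \subset \mathbb{R}^2$. Compactness of $S^1$ and continuity of $\Phi$ then produce a positive minimum $\delta > 0$ on $S^1$, and homogeneity of degree $d$ upgrades this to the global bound $\Phi(u,v) \geq \delta\,(u^2+v^2)^{d/2}$ on all of $\mathbb{R}^2$. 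For $(u,v) \in \mathcal{R}(X)$ one has $\Phi(u,v) \leq 2H(u,v) \leq 2X$ directly from the definition of $H$ as a maximum, and the two estimates combine to confine $\mathcal{R}(X)$ inside the disk $u^2 + v^2 \leq (2X/\delta)^{2/d}$, yielding boundedness and hence compactness.

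The main point requiring care — though not a serious obstacle — is the projective reading of (ii): the nonvanishing of $\Phi$ must be checked on all of $S^1$, including the points $(\pm 1,0)$ and $(0,\pm 1)$ that correspond to the two points at infinity of the affine chart of $\mathbb{P}^1(\mathbb{R})$. This is already built into the statement of (ii), since $\mathbb{P}^1(\mathbb{C})$ contains these points, and in any case they are excluded from being common zeros by the fact that $C$ is divisible by neither $u$ nor $v$ (assumption (iv)) together with the coprimality of $A_0$ and $B_0$. Once this identification is made explicit, the compactness-plus-homogeneity argument goes through directly.
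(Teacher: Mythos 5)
Your proof is correct, and for the substantive part of the lemma (boundedness of $\mathcal{R}(X)$) it takes a genuinely different route from the paper. The paper argues by contradiction with an unbounded sequence $(u_n,v_n)\in\mathcal{R}(X)$: using the factorization of $A$ and $B$ into linear forms, it extracts a root $\alpha$ of $A$ and a root $\beta$ of $B$ with $u_n/v_n\to\alpha$ and $u_n/v_n\to\beta$ (after splitting into the cases $|v_n|\ge|u_n|$ and $|u_n|\ge|v_n|$), concluding $\alpha=\beta$ is a common real root of $A$ and $B$, contradicting Assumption (ii). You instead convert (ii) into the quantitative bound $\Phi(u,v)=4|A|^3+27|B|^2\ge\delta\,(u^2+v^2)^{d/2}$ via positivity and compactness on $S^1$ plus degree-$d$ homogeneity of both terms (which uses $3\deg A=2\deg B=d$), and then pair it with $\Phi\le 2H\le 2X$ on $\mathcal{R}(X)$ to get an explicit containing disk. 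Your argument is cleaner, avoids the case analysis and the passage to subsequences, and produces an effective radius $(2X/\delta)^{1/d}$ rather than mere boundedness; both proofs use Assumption (ii) in an essential and equivalent way (your positivity of $\Phi$ on $S^1$, including the points $(\pm1,0)$, is exactly the statement that $A$ and $B$ have no common real zero in $\mathbb{P}^1$). You are also slightly more careful than the paper on the topological fine print: because of the open condition $v>0$ the set $\mathcal{R}(X)$ is not literally closed (e.g.\ the origin lies in its closure but not in the set), and your remark that passing to the closure only adds a measure-zero piece along $v=0$ is the right way to reconcile the compactness claim with the area statement and the later application of Davenport's lemma.
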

\begin{proof}
    It is clear that $\mathcal{R}(X)$ is closed; we show that $\mathcal{R}(X)$ is bounded. We write $A(u,v)=c_A\prod_{\alpha\in R_A} (u-\alpha v)^{n_\alpha}$ and $B(u,v)=c_B\prod_{\alpha\in R_B} (u-\alpha v)^{m_\alpha}$, where $R_A$ (resp. $R_B$) are the roots of $A$ (resp. $B$). Assume without loss of generality that $X\geq 27$. For $(u,v)\in \mathcal{R}(X)$ one has that $|A(u,v)|\leq X_1$ and $|B(u,v)|\leq X_2$ where $X_1:=(X/4)^{1/3}$ and $X_2:=(X/27)^{1/2}$. Note that $X_1\geq 1$ and $X_2\geq 1$. Given $z=(u,v)\in \mathbb{R}^2$ set $|z|:=\op{max}\{|u|, |v|\}$ and suppose by way of contradiction that $\mathcal{R}(X)$ is unbounded. Then, there is an infinite sequence $z_n=(u_n, v_n)\in \mathcal{R}(X)$ such that $|z_n|\rightarrow \infty$. Since $|A(u_n, v_n)|\leq X_1$ and $X_1\geq 1$, it follows that there is a root $\alpha$ of $A(u,v)$ such that $|u_n-\alpha v_n|\leq X_1$. 
    
    \par First, consider the case when for infinitely many values of $n$, $|v_n|\geq |u_n|$. Upon passing to a subsequence, assume without loss of generality that for all $n$, $|v_n|\geq |u_n|$. In particular, $|v_n|\rightarrow \infty$. Assume that $\alpha\neq \infty$, then we have that 
    \[|u_n/v_n-\alpha|\leq X_1/|v_n|.\] Therefore, for the root $\alpha$ of $A(u,v)$, we have that $|u_n/v_n-\alpha|\rightarrow 0$. In particular, $\alpha$ must be a real number. The same reasoning shows that there is a real root $\beta$ of $B(u, v)$ such that $|u_n/v_n-\beta|\rightarrow 0$. Being the limit of the same sequence, we get that $\alpha=\beta$. Since it is assumed that $A$ and $B$ have no common real root, this gives a contradiction. If $\alpha=\infty$, we find that $|v_n|\leq X_1$, which is a contradiction since $|v_n|\rightarrow \infty$. 
    \par Consider now the case when all but finitely many values of $n$, $|u_n|\geq |v_n|$. Assume without loss of generality that $|u_n|\geq |v_n|$ for all $n$. The symmetric argument with variables $u$ and $v$ interchanged gives a contradiction. Therefore, we have shown that $\mathcal{R}(X)$ is compact. 
    
    \par It is easy to see that $\mathcal{R}(X)=X^{1/d}\cdot \mathcal{R}$. Thus, $\op{Area}(\mathcal{R}(X))=X^{2/d} \op{Area}(\mathcal{R})$. This completes the proof.
\end{proof}

We recall the Principle of Lipschitz, which is also known as Davenport's lemma.
\begin{lemma}[Principle of Lipschitz/ Davenport's lemma]\label{davenport} Let \(\mathcal{R} \subseteq \mathbb{R}^2\) be a bounded region with a rectifiable boundary \(\partial \mathcal{R}\). Then, the number of integer lattice points contained in \(\mathcal{R}\) satisfies  
\[
\#(\mathcal{R} \cap \mathbb{Z}^2) = \op{Area}(\mathcal{R}) + O(\operatorname{len}(\partial \mathcal{R})),
\]
where the implicit constant depends only on the similarity class of \(\mathcal{R}\) and is independent of its size, orientation, or location in the plane \(\mathbb{R}^2\).
\end{lemma}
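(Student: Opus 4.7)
The plan is to prove this by comparing two counts associated with the tiling of $\mathbb{R}^2$ by unit squares. For each lattice point $(m, n) \in \mathbb{Z}^2$, associate the half-open unit square $Q_{m,n} := [m, m+1) \times [n, n+1)$; this gives a partition of $\mathbb{R}^2$. Classify each $Q_{m,n}$ as \emph{interior} if $Q_{m,n} \subseteq \mathcal{R}$, \emph{exterior} if $Q_{m,n} \cap \mathcal{R} = \emptyset$, and \emph{boundary} otherwise. Let $N_{\mathrm{int}}$, $N_{\mathrm{ext}}$, and $N_{\mathrm{bd}}$ denote the number of squares in each class. Note that since $\mathcal{R}$ is bounded, all three counts are finite, and a boundary square is exactly one that meets $\partial \mathcal{R}$.

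The first step is to show that both quantities of interest, $\operatorname{Area}(\mathcal{R})$ and $\#(\mathcal{R} \cap \mathbb{Z}^2)$, differ from $N_{\mathrm{int}}$ by at most $N_{\mathrm{bd}}$. For the area, one has
\[
N_{\mathrm{int}} \;\leq\; \operatorname{Area}(\mathcal{R}) \;=\; \sum_{(m,n)} \operatorname{Area}(Q_{m,n} \cap \mathcal{R}) \;\leq\; N_{\mathrm{int}} + N_{\mathrm{bd}},
\]
since only interior and boundary squares contribute and each unit square has area $1$. For the lattice count, the point $(m,n)$ lies in $\mathcal{R}$ only if $Q_{m,n}$ is interior or boundary; conversely, every interior $Q_{m,n}$ contains its corner $(m,n) \in \mathcal{R}$. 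Hence $N_{\mathrm{int}} \leq \#(\mathcal{R} \cap \mathbb{Z}^2) \leq N_{\mathrm{int}} + N_{\mathrm{bd}}$. Combining these two estimates gives
\[
\bigl|\#(\mathcal{R} \cap \mathbb{Z}^2) - \operatorname{Area}(\mathcal{R})\bigr| \;\leq\; N_{\mathrm{bd}}.
\]

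The main obstacle, and the crux of the proof, is to establish $N_{\mathrm{bd}} = O\bigl(\operatorname{len}(\partial \mathcal{R})\bigr) + O(1)$. Fix a rectifiable parametrization $\gamma : [0, L] \to \mathbb{R}^2$ of $\partial \mathcal{R}$ by arc length, where $L = \operatorname{len}(\partial \mathcal{R})$. Every boundary square $Q_{m,n}$ contains a point of $\gamma$, so it suffices to bound the number of unit squares the curve meets. Partition $[0, L]$ into subintervals of length at most $1/2$; the image of each such subinterval has diameter at most $1/2$ and is therefore contained in a box of side $1$, which can overlap at most $4$ unit cells of the integer grid. This gives $N_{\mathrm{bd}} \leq 4(2L + 1) = 8 \operatorname{len}(\partial \mathcal{R}) + 4$, with a constant that depends only on the grid and not on $\mathcal{R}$. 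Combining with the previous step yields the claimed asymptotic.

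The hard part is precisely the arc-length chopping argument in the last paragraph: one must exploit rectifiability to control the number of grid cells intersected by $\partial \mathcal{R}$, and doing so with a constant independent of the size, orientation, and position of $\mathcal{R}$ is exactly what makes the bound uniform on the similarity class. Everything else is bookkeeping between the area integral and the lattice sum over a common tiling.
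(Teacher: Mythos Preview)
The paper does not give a proof of this lemma; it is simply recalled as the classical Principle of Lipschitz (Davenport's lemma) and then used as a black box in the lattice-point estimates that follow. Your argument is the standard proof and is correct: the sandwich $N_{\mathrm{int}} \le \operatorname{Area}(\mathcal{R}),\ \#(\mathcal{R}\cap\mathbb{Z}^2) \le N_{\mathrm{int}}+N_{\mathrm{bd}}$ is routine, and the arc-length chopping bound $N_{\mathrm{bd}} \le 8\operatorname{len}(\partial\mathcal{R})+4$ is exactly the usual way to control the number of grid cells met by a rectifiable curve. One small remark: your bound carries an additive $+4$, so strictly you obtain $O(\operatorname{len}(\partial\mathcal{R})+1)$ rather than $O(\operatorname{len}(\partial\mathcal{R}))$; this is harmless in every application the paper makes (the regions are dilates $X^{1/d}\cdot\mathcal{R}$ of a fixed $\mathcal{R}$, so $\operatorname{len}\to\infty$), and in fact an $O(1)$ term is unavoidable for very small regions.
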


Lemma \ref{bounded area lemma} and Lemma \ref{davenport} together imply that
\begin{equation}\label{FX lemma}
    \# \mathcal{F}(X)\sim X^{2/d} \op{Area}(\mathcal{R}).
\end{equation}

\begin{theorem}\label{Theorem: Controlling size of twist minimality defect}
For all $(a, b)\in \cF$, we have
	\begin{equation}
	c_1 C(a, b)^{d/r} \leq H(a,b) \leq c_2 C(a, b)^{d/r},\label{Equation: upper and lower bounds for H}
	\end{equation}
	where $c_1$ and $c_2$ are positive constants.\end{theorem}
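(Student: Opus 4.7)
The plan is to reduce the two-sided estimate to a compactness argument on $\mathbb{P}^1(\mathbb{R})$. First I would factor $A = C \cdot A_0$ and $B = C \cdot B_0$; since $3\deg A = 2\deg B = d$ and $\deg C = r$, the polynomials $A_0$ and $B_0$ are homogeneous of degrees $d/3 - r \geq 0$ and $d/2 - r \geq 0$ respectively (both nonnegativities being forced by $C \mid A$ and $C \mid B$). Substituting gives
\[
H(a,b) = \max\!\bigl(4\,C(a,b)^3\,|A_0(a,b)|^3,\; 27\,C(a,b)^2\,|B_0(a,b)|^2\bigr).
\]

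The main idea is to form the quotient
\[
\Phi(u,v) := \frac{H(u,v)}{C(u,v)^{d/r}} = \max\!\bigl(4\,C(u,v)^{3-d/r}|A_0(u,v)|^3,\;27\,C(u,v)^{2-d/r}|B_0(u,v)|^2\bigr).
\]
This is well-defined on $\mathbb{R}^2 \setminus \{(0,0)\}$ because Assumption \ref{main ass}(ii), as exploited in Lemma \ref{bounded area lemma}, implies $C(u,v) > 0$ throughout $\mathbb{R}^2 \setminus \{(0,0)\}$. A direct degree check shows that each of $C^{3-d/r}|A_0|^3$ and $C^{2-d/r}|B_0|^2$ is positively homogeneous of degree $0$, so $\Phi$ descends to a continuous function on the compact space $\mathbb{P}^1(\mathbb{R})$ (equivalently, it suffices to evaluate $\Phi$ on the unit circle $S^1 \subset \mathbb{R}^2$).

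The decisive step is to verify that $\Phi$ is \emph{strictly positive} on $\mathbb{P}^1(\mathbb{R})$. If $\Phi(a_0,b_0) = 0$ for some $(a_0,b_0) \neq (0,0)$, then $A_0(a_0,b_0) = B_0(a_0,b_0) = 0$, hence $A(a_0,b_0) = B(a_0,b_0) = 0$, contradicting the hypothesis that $A$ and $B$ share no common real root. Continuity and strict positivity of $\Phi$ on the compact space $\mathbb{P}^1(\mathbb{R})$ then yield, by the extreme value theorem, constants $0 < c_1 \leq c_2$ with $c_1 \leq \Phi(u,v) \leq c_2$ for all $(u,v) \neq (0,0)$; specializing to $(u,v) = (a,b) \in \cF$ and multiplying through by $C(a,b)^{d/r}$ produces \eqref{Equation: upper and lower bounds for H}.

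I do not anticipate a significant obstacle here beyond correctly reading off the degrees and invoking the ``no common real roots'' assumption: the only mild subtlety is that a common real zero of $A_0$ and $B_0$ would automatically be a common real zero of $A = CA_0$ and $B = CB_0$, so the hypothesis on $A$ and $B$ passes cleanly to $A_0$ and $B_0$ and rules out vanishing of $\Phi$.
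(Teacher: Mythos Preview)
Your proposal is correct and follows essentially the same approach as the paper: form the degree-zero quotient $H(u,v)/C(u,v)^{d/r}$, observe it is continuous on $\mathbb{P}^1(\mathbb{R})$ (using that $C$ has no real zeros), and invoke compactness. Your write-up is in fact more careful than the paper's, which asserts the existence of positive $c_1,c_2$ directly from homogeneity without explicitly checking that the quotient cannot vanish; you make that step explicit via the ``no common real roots'' hypothesis.
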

\begin{proof}
   The function \( f(a,b) := \frac{H(a,b)}{C(a,b)^{d/r}} \) is real-valued. Since \( C(a,b)\geq 0\), and \( f(a,b) \) is homogeneous of degree \( 0 \), there exist positive constants \( c_1 \) and \( c_2 \) such that  
\[
c_1 \leq f(a,b) \leq c_2.
\]  
for all $(a,b)\in \mathbb{R}^2$. The desired result follows immediately.
\end{proof}

Our goal is to pass from an asymptotic for $\#\mathcal{F}(X)$ to an asymptotic for $\# \mathcal{C}(X)$. The idea here is that $\mathcal{C}(X)$ is cut out by congruence conditions at all prime numbers. We use a sieve to obtain the estimate. The main difficulty here is to obtain an upper bound on the tail estimate of the sieve.

\begin{definition}\label{defn of congruence condition mod N}Let $N$ be a (positive) natural number, then a congruence condition modulo $N$ is a subset $\mathcal{B}_N$ of $\cF$ such that there exists $s_N\subseteq (\Z/N\Z)^2$ for which $\mathcal{B}_N:=\pi_N^{-1}(s_N)$ where $\pi_N: \Z^2\rightarrow (\Z/N\Z)^2$ is the mod-$N$ reduction map.
\end{definition}
Given a congruence condition, we set $\mathfrak{d}(\mathcal{B}_N):=\frac{\# s_N}{N^2}$.
\begin{lemma}
    Let $\mathcal{B}_N\subset \mathcal{F}$ be a congruence condition modulo $N$, then, 
    \[\# \mathcal{B}_{N}(X)\sim \mathfrak{d}(\mathcal{B}_N)\op{Area}(\mathcal{R}) X^{2/d}.\]
\end{lemma}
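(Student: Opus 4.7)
The plan is to decompose $\mathcal{B}_N$ into residue classes modulo $N$ and apply Davenport's lemma (Lemma \ref{davenport}) on each class, then sum. By definition, there is a set $s_N \subseteq (\Z/N\Z)^2$ such that
\[
\mathcal{B}_N = \bigsqcup_{(a_0,b_0)\in s_N}\bigl((a_0,b_0) + N\Z^2\bigr),
\]
and so $\mathcal{B}_N(X)$ is the disjoint union, over representatives $(a_0,b_0)\in s_N$, of the sets of $(a,b)$ lying in $(a_0,b_0)+N\Z^2$ with $b>0$ and $H(a,b)\leq X$.

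For each fixed representative, I would substitute $a = a_0 + Nm$ and $b = b_0 + Nn$, so that counting $(a,b)$ as above becomes counting lattice points $(m,n)\in\Z^2$ in the region
\[
\mathcal{R}_{a_0,b_0,N}(X) := \bigl\{(u,v)\in\mathbb{R}^2 \;\bigl|\; b_0+Nv>0,\; H(a_0+Nu,\,b_0+Nv)\leq X\bigr\}.
\]
Under the affine change of variables $T(u,v)=(a_0+Nu,\,b_0+Nv)$, whose Jacobian determinant is $N^2$, this region is the preimage of $\mathcal{R}(X)$ (up to the boundary discrepancy $b_0+Nv>0$ versus $v>0$, which affects only a strip of bounded width and contributes lower order). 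In particular, $\mathcal{R}_{a_0,b_0,N}(X)$ is bounded with rectifiable boundary by Lemma \ref{bounded area lemma}, and
\[
\op{Area}\bigl(\mathcal{R}_{a_0,b_0,N}(X)\bigr) = \frac{\op{Area}(\mathcal{R}(X))}{N^2} = \frac{\op{Area}(\mathcal{R})}{N^2}\, X^{2/d},
\]
with boundary length $O(X^{1/d}/N)$, the implied constant depending only on the similarity class of $\mathcal{R}$ (which is fixed).

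Applying Davenport's lemma to $\mathcal{R}_{a_0,b_0,N}(X)$ gives
\[
\#\bigl(\mathcal{R}_{a_0,b_0,N}(X)\cap \Z^2\bigr) = \frac{\op{Area}(\mathcal{R})}{N^2}\,X^{2/d} + O\!\left(\frac{X^{1/d}}{N}\right),
\]
where the implied constant is absolute (depending on $\mathcal{R}$ only). Summing over the $\# s_N$ residue classes yields
\[
\# \mathcal{B}_N(X) = \#s_N \cdot \frac{\op{Area}(\mathcal{R})}{N^2}\, X^{2/d} + O\!\left(\#s_N \cdot \frac{X^{1/d}}{N}\right) = \mathfrak{d}(\mathcal{B}_N)\op{Area}(\mathcal{R})\,X^{2/d} + O\!\left(N\,X^{1/d}\right).
\]
Since $N$ is fixed and $X^{1/d} = o(X^{2/d})$ as $X\to\infty$, the main term dominates and the claimed asymptotic follows.

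The only mildly delicate point I expect is the handling of the half-plane constraint $b>0$ (equivalently $b_0+Nv>0$) versus the defining condition $v>0$ of $\mathcal{R}$; this changes the region by a strip of bounded height in the $v$-direction, contributing only $O(X^{1/d}/N)$ to the area and hence absorbable in the error term. Everything else is a straightforward bookkeeping exercise once the translation-dilation change of variables has been carried out.
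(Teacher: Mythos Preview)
Your proposal is correct and follows essentially the same approach as the paper: decompose $\mathcal{B}_N$ into residue classes modulo $N$, perform the affine change of variables (translate by the representative and scale by $1/N$), and apply Davenport's lemma to each piece before summing. The paper writes this as $\#(\Z^2 \cap \tfrac{1}{N}(\mathcal{R}-\tilde{a})(X))$ and is terser about the error terms, while you track the $O(NX^{1/d})$ error explicitly; but the arguments are the same. (Your caveat about the discrepancy between $b_0+Nv>0$ and $v>0$ is in fact unnecessary: the preimage of $\mathcal{R}(X)$ under $T$ is exactly $\mathcal{R}_{a_0,b_0,N}(X)$, so no correction strip is needed.)
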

\begin{proof}
    For each $a\in s_N$, pick $\tilde{a}\in \mathcal{B}_N$ such that $a=\tilde{a}\pmod{N}$. Let $\Lambda$ consist of all pairs $(a,b)\in \mathcal{F}$ such that $(a,b)\equiv (0,0)\pmod{N}$. We write $\mathcal{B}_N$ as a union of translates 
    \[\mathcal{B}_N=\bigsqcup_{a\in s_N} (\Lambda+\tilde{a});\]
    and thus we note that 
    \[\begin{split} \# \mathcal{B}_N(X)= &\sum_{a\in s_N} \# \left((\Lambda+\tilde{a})\cap \mathcal{R}(X)\right),\\
    = & \sum_{a\in s_N} \# \left(\Lambda\cap (\mathcal{R}-\tilde{a})(X)\right),\\
    = & \sum_{a\in s_N} \# \left(\Z^2\cap \left(\frac{1}{N}\cdot(\mathcal{R}-\tilde{a})\right)(X)\right).
    \end{split}\]
    Thus by Lemma \ref{davenport}, 
    \[\begin{split}\# \mathcal{B}_N(X)\sim & \sum_{a\in s_N} \op{Area}\left(\left(\frac{1}{N}\cdot(\mathcal{R}-\tilde{a})\right)(X)\right),\\ =&\frac{\#s_N}{N^2}\op{Area}(\mathcal{R}(X)),\\
    = & \mathfrak{d}(\mathcal{B}_N)X^{2/d} \op{Area}(\mathcal{R}).
    \end{split}\]
    
\end{proof}

A congruence condition at a prime number $\ell$ is a set of the form $\mathcal{B}_{\ell^k}$. 

\begin{definition}\label{def of D}Let $\mathcal{D}$ be the set of tuples $(a,b)\in \cF$ such that 
\begin{itemize}
    \item $\op{gcd}(a,b)=1$, 
    \item $\op{md}(a,b)=1$.
\end{itemize}
Then we may write $\mathcal{D}=\bigcap_\ell \mathcal{D}_\ell$,
where $\ell$ ranges over all prime numbers and $\mathcal{D}_\ell$ consists of all pairs $(a,b)\in \cF$ such that:
\begin{itemize}
    \item $\ell\nmid \op{gcd}(a,b)$, 
    \item $\ell^4\nmid A(a,b)$ or $\ell^6\nmid B(a,b)$.
\end{itemize}
\end{definition}
Clearly $\mathcal{D}_\ell$ is determined by a congruence modulo $\ell^6$. The set of residue classes determining $\mathcal{D}_\ell$ is \[s_{\ell^6}:=\{(a,b)\in (\Z/\ell^6)\mid \ell\nmid a \text{ or }\ell\nmid b\text{, and } \ell^4\nmid A(a,b)\text{ or }\ell^6\nmid B(a,b)\}.\] Note that here $\ell^6|B(a,b)$ is equivalent to $B(a,b)=0$ since the elements $a,b\in \Z/\ell^6$. Let $\mathcal{D}_\ell':=\cF\setminus \mathcal{D}_\ell$ and consider the associated set of residue classes $s_{\ell^6}'$ consisting of $(a,b)$ such that at least one of the following is satisfied:
\begin{itemize}
    \item $\ell| a$ and $\ell|b$, 
    \item $\ell^4|A(a,b)$ and $\ell^6|B(a,b)$.
\end{itemize}
 We may as well write $s_{\ell^6}'=a_\ell\cup b_\ell$ where $a_\ell$ consists of pairs for which $\ell|a$ and $\ell|b$ and $b_\ell$ consists of pairs for which $(a,b)\notin a_\ell$ and $\ell^4|A(a,b)$ and $\ell^6|B(a,b)$. 
\begin{definition}\label{defn of Sigma}
Let $\Sigma$ be the finite set of primes dividing $\op{Res}(A_0(t,1), B_0(t,1))$, $\op{Res}(A_0(1,t), B_0(1,t))$, $\op{Disc}(C(t,1))$, or $\op{Disc}(C(1,t))$.
\end{definition}

Suppose that $\ell\notin \Sigma$, then if $\ell^4|A(a,b)$ and $\ell^6|B(a,b)$, it follows that $\ell^4|C(a,b)$. There are at most $r+\ell$ solutions $(a,b)\in (\Z/\ell)^2$ to $C(a,b)\equiv 0\pmod{\ell}$. Since $\ell\nmid \op{Disc}(C)$, it follows that each of these solutions lift to exactly one solution $(a,b)\in (\Z/\ell^4)^2$ such that $\ell^4|C(a,b)$. This in turn implies that the number of solutions in $(\Z/\ell^6)^2$ lifting a solution in $(\Z/\ell)^2$ is $\ell^4$. Thus, in all, $\#b_\ell\leq (r+\ell)\ell^4$ and $\#a_\ell=\ell^{10}$. Thus, \[\frac{\#s_{\ell^6}'}{\ell^{12}}\leq \frac{1}{\ell^{2}}+\frac{(r+\ell)}{\ell^8}.\]
 Set \[\mathfrak{d}_\ell:=\mathfrak{d}(\mathcal{D}_\ell)\geq \left(1-\frac{\#s_{\ell^6}'}{\ell^{12}}\right)\geq \left(1-\frac{1}{\ell^{2}}-\frac{(r+\ell)}{\ell^8}\right).\] It is easy to see that the product $\prod_\ell \mathfrak{d}_\ell$ converges and since the sum of logarithms
 \[-\sum_{\ell} \log\left(1-\frac{1}{\ell^2}-\frac{r+\ell}{\ell^8}\right)\]\noindent converges, it follows that this product is non-zero. We show in this section that 
\[\# \mathcal{D}(X)\sim \prod_\ell \mathfrak{d}_\ell\op{Area}(\mathcal{R})X^{2/d}.\]
\begin{lemma}\label{silly lemma}
    Let $(a,b)\in \mathcal{F}(X)$ then $|a|, |b|\ll X^{1/d}$.
\end{lemma}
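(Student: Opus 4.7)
The plan is to deduce the bound directly from the scaling and compactness properties of the region $\mathcal{R}$ that were already established in Lemma \ref{bounded area lemma}. By definition, if $(a,b)\in \mathcal{F}(X)$ then $b > 0$ and $H(a,b) \leq X$, which is precisely the condition $(a,b) \in \mathcal{R}(X)$. So the question reduces to giving a uniform bound on the coordinates of points in $\mathcal{R}(X)$.

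First I would invoke Lemma \ref{bounded area lemma}, which tells me two things: $\mathcal{R} = \mathcal{R}(1)$ is compact, and $\mathcal{R}(X) = X^{1/d} \cdot \mathcal{R}$. Since $\mathcal{R}$ is compact in $\mathbb{R}^2$, there exists a constant $M > 0$ (depending only on the polynomials $A$ and $B$) such that $\mathcal{R}$ is contained in the square $[-M, M]^2$. Scaling by $X^{1/d}$ then gives the containment
\[
\mathcal{R}(X) \subseteq [-M X^{1/d}, M X^{1/d}]^2.
\]
Applying this to $(a,b) \in \mathcal{F}(X) \subseteq \mathcal{R}(X) \cap \mathbb{Z}^2$ yields $|a|, |b| \leq M X^{1/d}$, which is the desired bound $|a|, |b| \ll X^{1/d}$ with an implicit constant depending only on $A$ and $B$.

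There is no real obstacle here: all the substantive work was carried out in the proof of Lemma \ref{bounded area lemma}, where the hypothesis that $A$ and $B$ share no common real roots (Assumption \ref{main ass}(ii)) was used to rule out escape-to-infinity sequences in $\mathcal{R}(X)$. Once compactness of $\mathcal{R}$ and the homogeneity scaling $\mathcal{R}(X) = X^{1/d}\cdot\mathcal{R}$ are in hand, the lemma is immediate.
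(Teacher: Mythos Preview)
Your proof is correct and essentially identical to the paper's own argument: the paper also picks a constant $C>0$ with $\mathcal{R}\subseteq[-C,C]^2$, invokes the scaling $\mathcal{R}(X)=X^{1/d}\cdot\mathcal{R}$ from Lemma~\ref{bounded area lemma}, and concludes $\mathcal{R}(X)\subseteq[-CX^{1/d},CX^{1/d}]^2$.
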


\begin{proof}
    Let $C>0$ be such that the square $[-C,C]^2$ contains $\mathcal{R}$. This implies that \[\mathcal{R}(X)=X^{1/d}\cdot\mathcal{R}\subseteq [-C X^{1/d},C X^{1/d}]^2,\] from which the result follows.
\end{proof}

\begin{proposition}
    Let $z>0$ be a real number, then with respect to notation above, we have that:
    \begin{equation}\label{main sieve eqn}
    \bigcup_{\ell>z} \#\mathcal{D}_\ell'(X)\leq C\sum_{\ell>z} \frac{1}{\ell^2} X^{2/d}+o(X^{2/d})
\end{equation}
where $C>0$ is an absolute constant.
\end{proposition}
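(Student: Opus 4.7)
The plan is to split $\#\mathcal{D}_\ell'(X) \leq \#(a_\ell \cap \cF(X)) + \#(b_\ell \cap \cF(X))$ following the decomposition set up before the proposition, estimate each piece by applying Davenport's lemma (Lemma \ref{davenport}) to the appropriate sublattice or union of residue classes, sum over primes $\ell > z$, and truncate via Lemma \ref{silly lemma} to eliminate contributions from very large $\ell$. Since $\#\bigcup \mathcal{D}_\ell'(X) \leq \sum \#\mathcal{D}_\ell'(X)$, it suffices to bound the sum.

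For the $a_\ell$ contribution, the condition $\ell \mid \gcd(a,b)$ identifies the pairs with the lattice points of $\Z^2$ inside the similar region $\tfrac{1}{\ell}\mathcal{R}(X)$. Lemma \ref{davenport}, whose implicit constant depends only on the shape of $\mathcal{R}$, then yields
\[
\#\{(a,b)\in\cF(X): \ell \mid \gcd(a,b)\} \;=\; \frac{\op{Area}(\mathcal{R})\,X^{2/d}}{\ell^{2}} \;+\; O\!\left(\frac{X^{1/d}}{\ell}\right).
\]
Because $b > 0$ and $|a|,|b| \ll X^{1/d}$ by Lemma \ref{silly lemma}, this count already vanishes once $\ell > C_{0}X^{1/d}$ for some absolute constant $C_{0}$.

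For the $b_\ell$ contribution with $\ell \notin \Sigma$, the discussion preceding the statement reduces the defining conditions to $\ell^{4}\mid C(a,b)$ with $\ell \nmid \gcd(a,b)$, and shows that there are at most $r+\ell$ such residue classes modulo $\ell^{4}$ (obtained by dividing the bound $\#b_\ell \leq (r+\ell)\ell^{4}$ in $(\Z/\ell^{6})^{2}$ by the $\ell^{4}$ trivial lifts of each class from $(\Z/\ell^{4})^{2}$). Each class is a translate of $\ell^{4}\Z^{2}$, and Davenport applied to each yields $O(X^{2/d}/\ell^{8}) + O(X^{1/d}/\ell^{4})$ points; summing over classes gives
\[
\#(b_\ell \cap \cF(X)) \;=\; O\!\left(\frac{X^{2/d}}{\ell^{7}}\right) + O\!\left(\frac{X^{1/d}}{\ell^{3}}\right).
\]
This also vanishes for $\ell$ large enough, since $C > 0$ on $\mathbb{R}^{2}\setminus\{0\}$ combined with $|C(a,b)|\ll X^{r/d}$ forces $C(a,b) = 0$, hence $(a,b) = 0$, once $\ell^{4}$ exceeds a fixed multiple of $X^{r/d}$.

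Summing over $z < \ell \leq C_{0}X^{1/d}$, the main term $\op{Area}(\mathcal{R})\,X^{2/d}\sum_{\ell>z}\ell^{-2}$ comes from the $a_\ell$ leading contribution; the $b_\ell$ main-term tail $X^{2/d}\sum\ell^{-7}$ is absorbed into this by enlarging the constant $C$. The Davenport error contributes $O(X^{1/d})\sum_{\ell \leq C_{0}X^{1/d}}\ell^{-1} = O(X^{1/d}\log\log X)$ by Mertens, plus a convergent tail $O(X^{1/d})\sum \ell^{-3}$; both are $o(X^{2/d})$. The finitely many primes in $\Sigma\cap(z,\infty)$ contribute only an absolute $O_{\Sigma}(X^{2/d})$, which is absorbed either by enlarging $C$ or by taking $z > \max\Sigma$ (harmless for the downstream sieve). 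The principal technical obstacle is the $b_\ell$ analysis: a crude bound would give $O(\ell^{4})$ residue classes modulo $\ell^{4}$, still enough for convergence of the main term but wasteful on the error; the sharp linear-in-$\ell$ bound $r+\ell$, which uses Hensel lifting together with $\ell \nmid \op{Disc}(C)$ for $\ell \notin \Sigma$, is the arithmetic input that keeps the estimates clean. Everything else is bookkeeping.
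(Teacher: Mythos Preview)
Your treatment of the $a_\ell$ contribution and the overall sieve structure are fine and run parallel to the paper's handling of $\mathcal{S}_1$. The gap is in the $b_\ell$ count. You assert there are at most $r+\ell$ residue classes $(a,b)\bmod\ell^4$ with $\ell\nmid\gcd(a,b)$ and $\ell^4\mid C(a,b)$, deducing this from the bound $\#b_\ell\le(r+\ell)\ell^4$ stated before the proposition; but that bound is itself incorrect, and the paper's actual proof of the proposition does not rely on it. The zero locus of a squarefree homogeneous form $C$ of degree $r$ is a \emph{curve}, not a finite set: for each of the $\ell^3(\ell-1)$ units $b\in(\Z/\ell^4\Z)^\times$, one-variable Hensel gives up to $r$ values of $a$ with $\ell^4\mid C(a,b)$, so the number of such primitive pairs is $\Theta(\ell^4)$ whenever $C$ has a root mod $\ell$, not $O(\ell)$. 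With $\Theta(\ell^4)$ classes, applying Davenport to each translate of $\ell^4\Z^2$ fails in the range $X^{1/(4d)}\ll\ell\ll X^{r/(4d)}$: there the scaled region $\ell^{-4}\mathcal{R}(X)$ has sub-unit diameter, each class carries an irreducible $O(1)$ error, and the per-prime total $O(\ell^4)$ sums over that range to roughly $X^{5r/(4d)}$, which for $r=4$ equals $X^{5/d}$, not $o(X^{2/d})$.

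The paper's proof sidesteps this by counting one variable at a time on the analogue $\mathcal{S}_2$: for each of the $O(X^{1/d})$ integers $a$ in range, one-variable Hensel in $b$ gives at most $r$ residue classes for $b\bmod\ell^4$, each containing $O(X^{1/d}/\ell^4)+O(1)$ admissible integers $b$. The resulting per-prime error $O(X^{1/d})$ is independent of $\ell$, and summing over primes $\ell\ll X^{r/(4d)}$ gives $O\bigl(X^{1/d}\,\pi(X^{r/(4d)})\bigr)=o(X^{2/d})$; this is exactly where the hypothesis $r\le4$ of Assumption~\ref{main ass}(iv) enters. Replacing your two-variable per-class Davenport count for $b_\ell$ by this one-variable count repairs the argument.
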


 \begin{proof}
Since $z$ can be chosen to be large and $\Sigma$ is finite, we assume without loss of generality that $\ell\leq z$ for all primes $\ell\in \Sigma$. Let's write $\mathcal{G}_z:=\bigcup_{\ell>z} \mathcal{C}_\ell'$. Notice that $\mathcal{G}_z\subseteq \cS_1\cup \cS_2\cup \cS_3$, where:
\begin{itemize}
    \item the set $\cS_1$ consists of the pairs such that $\ell|a$ and $\ell|b$ for some prime $\ell>z$.
    \item The set $\cS_2$ consists of $(a,b)$ such that $C(a,b)\neq 0$, $
\ell\nmid a,b$ and $C(a,b)\equiv 0\pmod{\ell^4}$ for some prime $\ell>z$.
    \item The set $\cS_3$ consists of $(a,b)$ such that $C(a,b)=0$.
\end{itemize}
\par First we bound $\#\cS_1(X)$. Since $H(a,b)\leq X$, Lemma \ref{silly lemma} implies that $|a|, |b|\ll X^{1/d}$. The number of choices of $(a,b)$ such that $\ell$ divides $a$ and $b$ for some $\ell>z$ can be bounded above as follows:
\[\#\cS_1(X)\ll \sum_{\ell>z} \left(\left\lfloor 
 \frac{X^{1/d}}{\ell}\right\rfloor \right)^2\ll \sum_{\ell>z} \frac{X^{2/d}}{\ell^2}.\]\\

\par Next, let's bound $\#\cS_2(X)$. By Theorem \ref{Theorem: Controlling size of twist minimality defect}, $C(a,b)\ll X^{r/d}$ and thus, if $\ell>z$ is as above for which $\ell^4|C(a,b)$ and $C(a,b)\neq 0$, then $\ell\ll X^{r/4d}$. For every $a$, there are at most $r$ residue classes of $b$ mod $\ell$ which satisfy $C(a,b)=0\mod{\ell}$. Since $\ell\notin \Sigma$, it follows that each residue class $b$ lifts to a unique solution modulo $\ell^4$ by Hensel's lemma. The number of $b$ in each residue class mod $\ell^4$ is $\ll\frac{ X^{1/d}}{\ell^4}+O(1)$. Thus, we find that 
\[\begin{split}\# \cS_2(X)\ll & \sum_{z<\ell\ll X^{r/4d}} X^{1/d}\left(\frac{ X^{1/d}}{\ell^4}+O(1)\right)\\ \leq &\sum_{\ell>z} \frac{1}{\ell^4} X^{2/d}+O(\pi(X^{r/4d}) X^{1/d})\\ =&\sum_{\ell>z} \frac{1}{\ell^4} X^{2/d}+o(X^{2/d}).\end{split}\]
Here we have used the assumption that $r\leq 4$ and replaced $X^{r/4d}$ with $X^{1/d}$. Finally, in order to bound $\#\cS_3(X)$ we see that each value of $a$ determines $b$ up to $O(1)$ choices, and thus, 
\begin{equation}\label{S3 bound}\#\cS_3(X)=O(X^{1/d})=o(X^{2/d}).\end{equation} Combining these estimates, we have proven \eqref{main sieve eqn}.

 \end{proof}
\begin{theorem}\label{main thm section 3}
    With respect to notation above,
   \[ \# \cC(X)\sim \prod_\ell \mathfrak{d}_\ell\op{Area}(\mathcal{R})X^{2/d}.\]
\end{theorem}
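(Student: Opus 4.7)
The plan is to apply a standard sieve argument that combines the congruence‐condition asymptotic from the preceding lemma with the tail bound \eqref{main sieve eqn}. For any finite set of primes $S$, let $\mathcal{D}^S := \bigcap_{\ell \in S} \mathcal{D}_\ell$. Since each $\mathcal{D}_\ell$ is a congruence condition modulo $\ell^6$, the Chinese remainder theorem shows that $\mathcal{D}^S$ is a congruence condition modulo $N_S := \prod_{\ell \in S} \ell^6$, and its density factors as $\mathfrak{d}(\mathcal{D}^S) = \prod_{\ell \in S} \mathfrak{d}_\ell$. The congruence–condition lemma established above therefore gives
\[
\# \mathcal{D}^S(X) \sim \Bigl( \prod_{\ell \in S} \mathfrak{d}_\ell \Bigr) \op{Area}(\mathcal{R}) \, X^{2/d}.
\]

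Next, I would take $S = S_z := \{\ell \text{ prime} : \ell \leq z\}$ for a parameter $z > 0$ and use the two inclusions
\[
\mathcal{D} \subseteq \mathcal{D}^{S_z}, \qquad \mathcal{D}^{S_z} \setminus \mathcal{D} \subseteq \bigcup_{\ell > z} \mathcal{D}_\ell^{c},
\]
to sandwich $\# \mathcal{D}(X)$ from both sides. The tail estimate \eqref{main sieve eqn} gives
\[
\#\bigl(\mathcal{D}^{S_z}(X) \setminus \mathcal{D}(X)\bigr) \;\leq\; C \sum_{\ell > z} \frac{1}{\ell^2}\, X^{2/d} + o(X^{2/d}),
\]
so dividing by $X^{2/d}$ and letting $X \to \infty$ with $z$ fixed yields
\[
\Bigl(\prod_{\ell \leq z} \mathfrak{d}_\ell\Bigr) \op{Area}(\mathcal{R}) - C \sum_{\ell > z} \frac{1}{\ell^2} \;\leq\; \liminf_{X \to \infty} \frac{\# \mathcal{D}(X)}{X^{2/d}} \;\leq\; \limsup_{X \to \infty} \frac{\# \mathcal{D}(X)}{X^{2/d}} \;\leq\; \Bigl(\prod_{\ell \leq z} \mathfrak{d}_\ell\Bigr) \op{Area}(\mathcal{R}).
\]
Letting $z \to \infty$, the tail $\sum_{\ell > z} \ell^{-2}$ vanishes while $\prod_{\ell \leq z} \mathfrak{d}_\ell \to \prod_\ell \mathfrak{d}_\ell$ (the latter product being convergent and nonzero, as noted just before the statement), yielding $\# \mathcal{D}(X) \sim \prod_\ell \mathfrak{d}_\ell \cdot \op{Area}(\mathcal{R}) \cdot X^{2/d}$.

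Finally, I compare $\mathcal{D}(X)$ with $\mathcal{C}(X)$: by definition, $\mathcal{C}$ differs from $\mathcal{D}$ only by the discriminant condition $4A(a,b)^3 + 27B(a,b)^2 \neq 0$. By Assumption \ref{main ass}(iii), the polynomial $4A^3 + 27B^2 \in \mathbb{Z}[u,v]$ is a nonzero homogeneous polynomial of degree $d$, so its real vanishing locus is a finite union of lines through the origin. Each such line contributes $O(X^{1/d})$ pairs $(a,b) \in \mathcal{F}(X)$, exactly as in the bound \eqref{S3 bound} for $\#\mathcal{S}_3(X)$. Consequently $\# \mathcal{C}(X) = \# \mathcal{D}(X) + O(X^{1/d}) = \# \mathcal{D}(X) + o(X^{2/d})$, completing the proof.

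The substantive difficulty has essentially been dispatched by the uniform tail bound \eqref{main sieve eqn}, whose most delicate piece is controlling pairs with $\ell^4 \mid C(a,b)$ for primes $\ell$ as large as $X^{r/4d}$ via Hensel lifting (which requires $\ell \notin \Sigma$ and the hypothesis $r \leq 4$). Once that bound is in hand, the completion of the sieve via the sandwich argument above is routine.
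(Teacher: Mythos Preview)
Your proposal is correct and follows essentially the same route as the paper: the paper likewise reduces to showing $\#\mathcal{D}(X)\sim\prod_\ell\mathfrak{d}_\ell\,\op{Area}(\mathcal{R})\,X^{2/d}$ via the sandwich $\mathcal{D}\subseteq\mathcal{D}^z$ and $\mathcal{D}^z\subseteq\mathcal{D}\cup\bigcup_{\ell>z}\mathcal{D}_\ell'$, applies the congruence-condition lemma and the tail bound \eqref{main sieve eqn}, and lets $z\to\infty$. The only cosmetic differences are that the paper handles the $\mathcal{C}$ versus $\mathcal{D}$ comparison first (citing \eqref{S3 bound} directly rather than your explicit lines-through-the-origin argument) and leaves the Chinese remainder step implicit.
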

\begin{proof}
     Note that $\mathcal{C}(X)$ consists of all pairs $(a,b)\in \mathcal{D}(X)$ for which $4A(a,b)^3+27B(a,b)^2\neq 0$. From \eqref{S3 bound}, we find that $\# \mathcal{C}(X)=\# \mathcal{D}(X)+O(X^{1/d})$. Hence it suffices to show that 
     \[ \# \mathcal{D}(X)\sim \prod_\ell \mathfrak{d}_\ell\op{Area}(\mathcal{R})X^{2/d}.\]We prove our result by showing that 
\begin{equation}\label{prop 4.15 eq 1}
    \limsup_{X\rightarrow \infty} \frac{\# \mathcal{D}(X)}{X^{2/d}}\leq \prod_{\ell}  \mathfrak{d}_\ell \op{Area}(\mathcal{R}).
    \end{equation}
    and 
    \begin{equation}\label{prop 4.15 eq 2}
    \liminf_{X\rightarrow \infty} \frac{\# \mathcal{D}(X)}{X^{2/d}}\geq \prod_{\ell} \mathfrak{d}_\ell \op{Area}(\mathcal{R}).
    \end{equation}
    Let $z$ be a positive real number and $\mathcal{D}^z:=\bigcap_{\ell\leq z} \mathcal{D}_\ell$. Since it is defined by conditions at finitely many primes, $\mathcal{D}^z$ is defined by a congruence condition for a large natural number $N$. Thus, we have that $\mathcal{D}^z(X)\sim \prod_{\ell\leq z} \mathfrak{d}_\ell \op{Area}(\mathcal{R}) X^{2/d}$. Since $\mathcal{D}(X)$ is contained in $\mathcal{D}^z(X)$, we find that 
     \[\limsup_{X\rightarrow \infty} \frac{\# \mathcal{D}(X)}{X^{2/d}}\leq \prod_{\ell\leq z} \mathfrak{d}_\ell \op{Area}(\mathcal{R}),\]
     and letting $z\rightarrow \infty$, one has that
 \[\limsup_{X\rightarrow \infty} \frac{\# \mathcal{D}(X)}{X^{2/d}}\leq \prod_{\ell} \mathfrak{d}_\ell \op{Area}(\mathcal{R}).\]
 Since \[\mathcal{D}^z(X)\subseteq \mathcal{D}(X)\cup \left(\bigcup_{\ell>z } \mathcal{D}_\ell'(X)\right),\]where we recall that $\mathcal{D}_\ell'$ is the complement of $\mathcal{D}_\ell$. Then, \eqref{main sieve eqn} implies that there is an absolute constant $C>0$ for which
\[\begin{split}\liminf_{X\rightarrow \infty} \frac{\# \mathcal{D}(X)}{X^{2/d}}\geq & \lim_{X\rightarrow \infty} \frac{\# \mathcal{D}^z(X)}{X^{2/d}}-C \sum_{\ell>z}\frac{1}{\ell^2},\\ = &\prod_{\ell \leq z} \mathfrak{d}_\ell \op{Area}(\mathcal{R})- C \sum_{\ell>z}\frac{1}{\ell^2}. \end{split}\]
Letting $z\rightarrow \infty$, we find that 
\begin{equation}\label{liminf equation}\liminf_{X\rightarrow \infty} \frac{\# \mathcal{D}(X)}{X^{2/d}}\geq \prod_{\ell} \mathfrak{d}_\ell \op{Area}(\mathcal{R}).\end{equation}
Therefore, the limit $\lim_{X\rightarrow \infty} \frac{\# \mathcal{D}(X)}{X^{2/d}}$ exists and equals $\prod_{\ell} \mathfrak{d}_\ell \op{Area}(\mathcal{R})$, proving our result.
\end{proof}

\section{Counting elliptic curves with prescribed entanglements}

\par In this section, we count elliptic curves with prescribed entanglement. The Assumption \ref{main ass} holds for the unexplained entanglements \(((2,3), \mathbb{Z}/2\mathbb{Z})\) and \(((2,5), \mathbb{Z}/2\mathbb{Z})\), and it is for these types that our results apply.

\subsection{$(2,3)$ entanglements of Type $\Z/2\Z$}
\par In this subsection, we let $\cF_{1}$ be the family of elliptic curves $y^2 = x^3 + f(t)x + g(t)$, where 
\[\begin{split}
f(t) &:=(-3t^2 - 36)c(t),\\
g(t) &:=(2t^5 + 72t^3 + 864t)c(t),\\
c(t) & :=t^4 + 36t^2 + 432,\\
\end{split}\]
where $t\in \Q$. Such elliptic curves have $j$-invariant given by $j(t)=(t^2+12)^3$ and have a $(2,3)$-entanglement of Type $\Z/2\Z$ (cf. \cite[p. 852]{danielsmorrow}).\\

Setting $f_0(t):=(-3t^2 - 36)$ and $g_0(t):=(2t^5 + 72t^3 + 864t)$, we have that $c(t)=\op{gcd}(f(t), g(t))$ and  $f_0$ and $g_0$ are coprime.

Consider the homogenized equations, given by:
\[\begin{split}
A(a,b) &:=(-3a^2- 36b^2)C(a,b) = A_0(a,b)C(a,b),\\
B(a,b) &:=(2a^5 + 72a^3b^2 + 864ab^4)C(a,b) = B_0(a,b)C(a,b),\\
C(a,b) & :=a^4 + 36a^2b^2 + 432b^4.\\
\end{split}\]
Note that $B_0(a,b) = 2aC(a,b)$, so $B(a,b) = 2aC(a,b)^2$. We find that \[d=3\op{deg}A=2\op{deg}B=18\text{, and }r=\op{deg}C=4.\] Since $r\leq 4$ and $C(u, v)$ has no real roots, the conditions of Assumption \ref{main ass} are satisfied.\\
\par We note that
\[\begin{split}
    & \op{Res}(A_0(t,1), B_0(t,1))=-2^{12}\times 3^{10},\\
    & \op{Res}(A_0(1,t), B_0(1,t))=2^{10}\times 3^8,\\
    & \op{Disc}(C(t,1))=2^{16} \times 3^9,\\
    & \op{Disc}(C(1,t))=2^{20}\times 3^{12}
\end{split}\] and therefore $\Sigma=\{2, 3\}$. 


\begin{lemma}\label{dl lemma for (2,3) entanglements}
    For $\ell\notin \Sigma$, $\ell^4|A(a,b)$ and $\ell^6|B(a,b)$ if and only if $\ell^4|C(a,b)$. 
\end{lemma}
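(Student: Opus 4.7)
The plan is to prove both directions by direct $\ell$-adic valuation bookkeeping, exploiting the two factorisations $A=A_0C$ and $B=2aC^2$ together with the resolvent data encoded in $\Sigma$.

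The $(\Leftarrow)$ direction is immediate: if $v_\ell(C)\geq 4$, then $v_\ell(A)=v_\ell(A_0)+v_\ell(C)\geq 4$, while $v_\ell(B)=v_\ell(2)+v_\ell(a)+2v_\ell(C)\geq 8\geq 6$, using $\ell\neq 2$.

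For $(\Rightarrow)$ I would first dispose of the non-primitive case: if $\ell\mid a$ and $\ell\mid b$, then every monomial in $C(a,b)=a^4+36a^2b^2+432b^4$ is divisible by $\ell^4$ and the conclusion is automatic. It then remains to handle a primitive pair $(a,b)$ modulo $\ell$, where the key input is the assumption $\ell\notin\Sigma$. Since $B_0(t,1)=2t\,C(t,1)$ and $B_0(1,t)=2\,C(1,t)$, and since $A_0(0,1)=-36$ and $A_0(1,0)=-3$ are both units mod $\ell$ for $\ell\neq 2,3$, the nonvanishing of $\operatorname{Res}(A_0(t,1),B_0(t,1))$ and $\operatorname{Res}(A_0(1,t),B_0(1,t))$ modulo $\ell$ translates into the bivariate statement that $A_0$ and $C$ share no common zero in $\mathbb{P}^1(\overline{\mathbb{F}_\ell})$. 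In particular, at any primitive point $(a,b)$ at most one of $A_0(a,b),C(a,b)$ is divisible by $\ell$.

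The finish is a short case split. If $\ell\mid C(a,b)$, then $\ell\nmid A_0(a,b)$, so the hypothesis $\ell^4\mid A=A_0C$ forces $v_\ell(C)\geq 4$ as required. If instead $\ell\nmid C(a,b)$, then $\ell^6\mid B=2aC^2$ combined with $\ell\neq 2$ forces $v_\ell(a)\geq 6$, hence $\ell\mid a$, and by primitivity $\ell\nmid b$; but then $A_0(a,b)\equiv -36b^2\not\equiv 0\pmod\ell$ (since $\ell\neq 2,3$), contradicting $\ell^4\mid A$ once $\ell\nmid C$. This subcase is therefore vacuous, completing the proof. I do not foresee a real obstacle; the one slightly delicate point is translating the univariate resolvent information in $\Sigma$ into the bivariate coprimality of $A_0$ and $C$ modulo $\ell$, which becomes routine after one observes the cancellation $B_0=2\cdot(\text{linear})\cdot C$ in each dehomogenisation.
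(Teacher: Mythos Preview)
Your proof is correct and follows essentially the same approach as the paper: both directions rest on $C\mid A$, $C^2\mid B$ for $(\Leftarrow)$ and on $\ell\nmid\operatorname{Res}(A_0,B_0)$ for $(\Rightarrow)$. The paper compresses the forward direction into a single clause, whereas you unpack it via the explicit factorisation $B_0=2aC$ and a case split on whether $\ell\mid C$, but the underlying idea is identical.
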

\begin{proof}
    Note that $C(a,b)|A(a,b)$ and $C(a,b)^2|B(a,b)$ and therefore, if $\ell^4|C(a,b)$ then $\ell^4|A(a,b)$ and $\ell^6|B(a,b)$. On the other hand, since $\ell\nmid \op{Res}(A_0, B_0)$ we find that $\ell^4|C(a,b)$.
\end{proof}

\begin{lemma}\label{dl lemma for (2,3) entanglements:full}
Let $C(a,b)=a^4+36a^2b^2+432b^4$ and $\Sigma=\{2,3\}$.  For every prime $\ell\notin\Sigma$ one has
\[
\mathfrak{d}_\ell \;=\; 1 - \frac{1}{\ell^2} - \frac{r_\ell(\ell-1)}{\ell^5},
\]
where $r_\ell$ is the number of solutions of the congruence
\[
t^4 + 36 t^2 + 432 \equiv 0 \pmod{\ell}.
\]
For the exceptional primes we have the exact values
\[
\mathfrak{d}_2 \;=\; \frac12, \qquad
\mathfrak{d}_3  \;=\; \frac23.
\]
\end{lemma}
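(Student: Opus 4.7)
My plan is to compute $\mathfrak{d}_\ell$ directly from the decomposition $s'_{\ell^6} = a_\ell \sqcup b_\ell$ recorded in Section~3, where $a_\ell = \{(a,b)\in(\Z/\ell^6)^2 : \ell\mid a,\ \ell\mid b\}$ (so $\#a_\ell = \ell^{10}$) and $b_\ell$ is the set of pairs outside $a_\ell$ with $\ell^4\mid A(a,b)$ and $\ell^6\mid B(a,b)$. Then
\[
\mathfrak{d}_\ell \;=\; 1 - \frac{\ell^{10} + \#b_\ell}{\ell^{12}},
\]
so in every case the task reduces to counting $b_\ell$.

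For $\ell\notin\Sigma$, I apply Lemma \ref{dl lemma for (2,3) entanglements}, which replaces the conditions on $A$ and $B$ by the single condition $\ell^4\mid C(a,b)$. I then split the residue classes of $(a,b)$ modulo $\ell$ with $(a,b)\notin a_\ell$: for pairs with $\ell\nmid a,\ \ell\mid b$ one has $C(a,b)\equiv a^4 \pmod\ell$, a unit; for pairs with $\ell\mid a,\ \ell\nmid b$ one has $C(a,b)\equiv 432\,b^4 \pmod\ell$, again a unit as soon as $\ell\neq 2,3$. Only the class $\ell\nmid a,\ \ell\nmid b$ can contribute. Here, by homogeneity, $C(a,b)\equiv b^4\,C(ab^{-1},1)\pmod{\ell^6}$, and since $\ell\nmid\operatorname{Disc}(C(t,1))$ by definition of $\Sigma$, Hensel's lemma lifts each of the $r_\ell$ simple roots of $C(t,1)\pmod\ell$ uniquely to a root modulo $\ell^4$. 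Hence for each of the $\ell^5(\ell-1)$ admissible values of $b\pmod{\ell^6}$ there are exactly $r_\ell$ valid residues of $a$ modulo $\ell^4$, each lifting in $\ell^2$ ways to a residue modulo $\ell^6$, giving $\#b_\ell = r_\ell\,\ell^7(\ell-1)$. Substituting into the formula for $\mathfrak{d}_\ell$ yields the claimed expression.

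For $\ell\in\{2,3\}$ the Hensel/unit dichotomy breaks down, because $2,3\mid\operatorname{Disc}(C)$ and $2,3\mid 432$; the exceptional residue classes can now contribute. I would handle them by direct computation, exploiting the factorizations $A = -3(a^2+12b^2)\,C$ and $B = 2a\,C^2$. For $\ell=2$, substituting $a = 2a'$ shows $C = 16\bigl({a'}^4 + 9{a'}^2 b^2 + 27b^4\bigr)$, whose second factor is always odd when $b$ is odd, forcing $v_2(C)=4$ identically in the class "$a$ even, $b$ odd"; consequently $v_2(A)\geq 6$ and $v_2(B)\geq 10$, while the other two non-$a_2$ classes leave $A$ odd. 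This gives $\#b_2 = 2^{10}$ and hence $\mathfrak{d}_2 = 1/2$. For $\ell=3$, the analogous substitution $a=3a'$ in the class "$3\mid a,\ 3\nmid b$" yields $v_3(A_0)=2$ and $v_3(C)=3$ exactly (pulling out the factor $3^3$ from $C$ leaves the residue $16b^4$, a unit modulo $3$), so $v_3(A) = 5$ and $v_3(B)\geq 7$; the class "$3\nmid a,\ 3\mid b$" is ruled out by $C\equiv a^4\pmod 3$. This gives $\#b_3 = 2\cdot 3^{10}$ and hence $\mathfrak{d}_3 = 2/3$.

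The main obstacle I anticipate is the bookkeeping for $\ell\in\{2,3\}$: one must verify that the inequalities $v_\ell(A)\geq 4$ and $v_\ell(B)\geq 6$ hold \emph{uniformly} throughout the contributing residue class modulo $\ell$, not just for some representatives. Because $v_\ell(A_0)$ depends on finer information (for example the parity of $a'$ when $\ell=2$), the argument relies on the valuation coming from $C$ already being large enough to absorb any fluctuation in $v_\ell(A_0)$ and $v_\ell(2a)$. Pinning down this independence is the key computational point of the proof.
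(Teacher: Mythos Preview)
Your proposal is correct and follows essentially the same approach as the paper: for $\ell\notin\Sigma$ both arguments reduce via Lemma~\ref{dl lemma for (2,3) entanglements} to counting pairs with $\ell^4\mid C(a,b)$ and apply Hensel's lemma, while for $\ell\in\{2,3\}$ both exploit the factorizations $A=-3(a^2+12b^2)\,C$ and $B=2a\,C^2$ to compute $v_\ell(A)$ and $v_\ell(B)$ case by case. Your handling of the exceptional primes is in fact a bit more streamlined than the paper's---you verify directly that the single residue class ``$\ell\mid a,\ \ell\nmid b$'' is uniformly excluded (via $v_2(C)=4$, resp.\ $v_3(C)=3$) while the remaining classes force $A$ to be an $\ell$-unit, whereas the paper first sets up general valuation inequalities and then arrives at the same counts through an enumeration table.
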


\begin{proof}
\par First we prove the result for $\ell \neq 2,3$. Then $\mathfrak{d}_\ell = \frac{|s_{\ell^4}|}{\ell^8}$, where $s_{\ell^4}$ is the subset of $(\Z/\ell^4\Z)^2$ consisting of all pairs $(a,b)$ with $C(a,b) \neq 0 \mod \ell^4.$ We will examine $s_{\ell^4}$ by counting the number of solutions to $C(a,b) = 0$ in $(\Z/\ell^4\Z)^2.$ First, suppose that $\ell$ divides both $a$ and $b$. Then we immediately have $C(a,b) = 0 \mod \ell^4$. Since the kernel of $\Z/\ell^4\Z \rightarrow \Z/\ell\Z$ has size $\ell^3$, we have a total of $\ell^6$ residue classes mod $\ell^4$ of ordered pairs where this occurs. Now suppose that $\ell\nmid a$ or $\ell\nmid b$. If $\ell|b$, then $\ell|a^4$. Thus it is clear that $\ell\nmid b$. Let $t:=a/b$ and consider $p(t)=t^4+36t^2+432$. Let $t_0\in \Z/\ell \Z$ be one such solution. Since $p$ has no repeated root modulo $\ell$, it follows by Hensel's lemma that $t_0$ lifts to a unique solution in $\Z/\ell^4 \Z$. Thus there are $r_\ell|(\Z/\ell^4\Z)^\times|=r_\ell \ell^3(\ell-1)$ solutions where $r_\ell$ is the number of solutions to $p(t)=0$ for $t\in \Z/\ell \Z$. Thus we see that $\mathfrak{d}_\ell=1-\frac{1}{\ell^2}-\frac{r_\ell(\ell-1)}{\ell^5}$.
\par Next, we prove our results for $\ell=2,3$. We set
\[
u:=\min\{v_\ell(a),v_\ell(b)\},\qquad a=\ell^u a',\ b=\ell^u b',
\]
with at least one of \(a',b'\) coprime to $2$. Recall that
\[
A(a,b)=A_0(a,b)\,C(a,b),\qquad B(a,b)=B_0(a,b)\,C(a,b),
\]
with
\[
A_0(a,b)=-3(a^2+12b^2),\qquad B_0(a,b)=2a\,C(a,b),
\]
so that $B(a,b)=2a\,C(a,b)^2$. For a fixed prime $\ell$ set
\[
\alpha:=v_\ell\big(A_0(a,b)\big),\quad \beta:=v_\ell(a),\quad
\gamma:=v_\ell\big(C(a,b)\big),\quad \delta:=v_\ell(2).
\]
Then $v_\ell(A)=\alpha+\gamma$, $v_\ell(B)=\delta+\beta+2\gamma$ and the simultaneous non-minimality condition at \(\ell\) is
\[
\alpha+\gamma\ge4,\qquad \delta+\beta+2\gamma\ge6. \tag{$\ast$}
\]
\noindent Set \(u:=\min\{v_\ell(a),v_\ell(b)\}\) and write $a=\ell^u a',\qquad b=\ell^u b'$,
so at least one of \(a',b'\) is not divisible by $\ell$. A straightforward computation gives
\[
\gamma=v_\ell\big(C(a,b)\big)=4u+v_\ell\big(C(a',b')\big),
\]
\[
\alpha=v_\ell\big(A_0(a,b)\big)=2u+v_\ell\big(a'^2+12b'^2\big),
\qquad
\beta=u+v_\ell(a').
\]
Substituting these expressions into \((\ast)\) yields the equivalent inequalities
\[
2u+v_\ell(a'^2+12b'^2)+4u+v_\ell(C(a',b'))\ge4,
\]
\[
\delta+u+v_\ell(a')+2(4u+v_\ell(C(a',b')))\ge6,
\]
which simplify to
\[
6u + v_\ell(a'^2+12b'^2) + v_\ell(C(a',b')) \ge 4,
\tag{I}
\]
\[
9u + \delta + v_\ell(a') + 2v_\ell(C(a',b')) \ge 6.
\tag{II}
\]

We now analyze separately the two ranges \(u\ge1\) and \(u=0\).

\medskip

\noindent\textbf{(A) The case \(u\ge1\).}  If \(u\ge1\) then \(6u\ge6\), so (I) already holds; moreover \(9u\ge9\) and hence (II) holds as well.  Thus every pair with \(u\ge1\) (equivalently \(\ell\mid a\) and \(\ell\mid b\)) satisfies \((\ast)\), and therefore every residue class coming from \((a,b)\equiv(0,0)\pmod\ell\) is excluded.  Counting modulo \(\ell^6\) this produces the full \(\ell^6\)-block of excluded classes coming from common divisibility by \(\ell\). 

\medskip

\noindent\textbf{(B) The case \(u=0\).}  Here at least one of \(a',b'\) is an $\ell$-unit; we must show that \((\ast)\) forces \(v_\ell(C(a',b'))\ge4\) (i.e. \(\ell^4\mid C(a,b)\)), except for a few residue classes which will be handled by direct enumeration.  Since \(u=0\) the inequalities (I) and (II) reduce to
\[
v_\ell(a'^2+12b'^2) + v_\ell(C(a',b')) \ge 4,
\tag{I$_0$}
\]
\[
\delta + v_\ell(a') + 2v_\ell(C(a',b')) \ge 6.
\tag{II$_0$}
\]

We now treat the two exceptional primes separately.
\begin{description}
    \item[The case \(\ell=2\).]
Here \(\delta=v_2(2)=1\).  Because \(u=0\) we have \(a'=a,\ b'=b\) and at least one of \(a',b'\) is odd.

\begin{enumerate}
\item If both \(a'\) and \(b'\) are odd then
\[
a'^2+12b'^2\equiv 1 + 12\cdot 1 \equiv 13 \equiv 1 \pmod2,
\]
so \(v_2(a'^2+12b'^2)=0\).  Also
\[
C(a',b') = a'^4 + 36a'^2b'^2 + 432 b'^4 \equiv a'^4 \equiv 1 \pmod2,
\]
so \(v_2(C(a',b'))=0\).  Thus (I$_0$) and (II$_0$) are false: the left sides are \(\le0\) and \(1+0+0<6\) respectively.  Therefore no such pair with both coordinates odd can be excluded.

\item Suppose \(a'\) is odd and \(b'\) is even (so \(v_2(b')\ge1\)).  Write \(b'=2^t b_1\) with \(t\ge1\) and \(b_1\) odd.  Then
\[
v_2(a'^2 + 12 b'^2)=v_2( a'^2 + 12\cdot 2^{2t} b_1^2 )=0
\]
(since the first term is odd and the second is divisible by \(4\)).  Thus (I$_0$) forces \(v_2(C(a',b'))\ge4\).  But if \(v_2(C(a',b'))\le3\) then (II$_0$) reads \(1 + 0 + 2v_2(C(a',b')) \le 1+2\cdot 3 =7\), which is not enough to break the earlier inequality — so the only way both (I$_0$) and (II$_0$) can hold is that \(v_2(C(a',b'))\ge4\).  In particular any excluded pair in this subcase must satisfy \(2^4\mid C(a',b')\).

\item Suppose \(a'\) is even and \(b'\) is odd (so \(v_2(a')\ge1\), \(v_2(b')=0\)).  Write \(a'=2^t a_1\) with \(t\ge1\) and \(a_1\) odd.  Then
\[
v_2(a')=t\ge1,\qquad v_2(a'^2+12b'^2)=v_2(2^{2t}a_1^2 + 12b'^2)=0
\]
unless \(t=1\) and a special cancellation occurs; but checking \(t=1\) gives \(a'^2+12b'^2\equiv 4+a_1^2\not\equiv 0\pmod2\) since \(a_1^2\equiv1\).  Hence \(v_2(a'^2+12b'^2)=0\).  Then (I$_0$) again forces \(v_2(C(a',b'))\ge4\).  Moreover (II$_0$) becomes \(1+t+2v_2(C(a',b'))\ge6\), so with \(t\ge1\) this also forces \(v_2(C(a',b'))\ge2\); combining both inequalities implies in fact \(v_2(C(a',b'))\ge4\).  Thus in this subcase too any excluded pair must satisfy \(2^4\mid C(a',b')\).
\end{enumerate}

Therefore for \(\ell=2\) and \(u=0\) the only possible excluded residue classes are those for which \(2^4\mid C(a,b)\).  (There are no other sporadic possibilities beyond those: one checks the finitely many borderline residue classes modulo \(2^6\) and finds they are already included among the \(2^4\mid C\) lifts or in the $\ell|a,\ell|b$ block.)

\item[The case \(\ell=3\)]  Now \(\delta=v_3(2)=0\).  Again \(u=0\) means \(a',b'\) are the original residues with at least one of them a $3$-unit.

\begin{enumerate}
\item If both \(a',b'\) are $3$-units modulo \(3\) (i.e. \(a',b'\not\equiv0\pmod3\)), then
\[
a'^2+12b'^2\equiv a'^2 + 0 \equiv a'^2 \not\equiv 0\pmod3,
\]
so \(v_3(a'^2+12b'^2)=0\).  Also
\[
C(a',b')\equiv a'^4 \pmod3
\]
(since \(36\equiv0,432\equiv0\pmod3\)), and \(a'^4\not\equiv0\pmod3\).  Thus \(v_3(C(a',b'))=0\), and neither (I$_0$) nor (II$_0$) can hold.  Hence no such pair is excluded.

\item If one of \(a',b'\) is divisible by \(3\) and the other is not, say \(a'\equiv0\pmod3\) and \(b'\not\equiv0\pmod3\), then \(v_3(a')\ge1\) and \(v_3(b')=0\).  Compute
\[
a'^2+12b'^2 \equiv 12 b'^2 \equiv 0\pmod3,
\]
so \(v_3(a'^2+12b'^2)\ge1\).  Also
\[
C(a',b') = a'^4 + 36a'^2b'^2 + 432b'^4 \equiv 432 b'^4 \equiv 0 \pmod 3,
\]
and one checks modulo higher powers that it is possible for \(v_3(C(a',b'))\ge4\) precisely when \(a'\equiv0\pmod3\) and \(b'\) is a $3$-unit lifting a root \(t\) of \(t^4+36t^2+432\) modulo \(3\) (and then by Hensel such \(t\) lifts to higher powers when the root is simple).  Moreover (II$_0$) becomes
\[
0 + v_3(a') + 2v_3(C(a',b')) \ge6,
\]
so with \(v_3(a')\ge1\) this forces \(v_3(C(a',b'))\ge3\), and combining with (I$_0$) yields in fact \(v_3(C(a',b'))\ge4\).  Thus excluded pairs in this subcase arise only when \(3^4\mid C(a',b')\).
\end{enumerate}

The symmetric subcase \(b'\equiv0\pmod3,\ a'\) a $3$-unit is handled identically.
\end{description}
The tables 1 and 2 enumerate, for \(\ell=2\) and \(\ell=3\), the exact number of excluded residue classes \((a,b)\) modulo \(\ell^6\) that satisfy the simultaneous non-minimality condition \(v_\ell(A)\ge4\) and \(v_\ell(B)\ge6\).  Rows are grouped by \(u=\min\{v_\ell(a),v_\ell(b)\}\).
\begin{table}[h]\label{table 1 1}
\centering
\caption{Local enumeration modulo $2^6$ for the $(2,3)$ family.
Total residue pairs $(a,b)\bmod 2^6$: $2^{12}=4096$.}
\begin{tabular}{l r r}
\toprule
Case & \# excluded pairs & \# allowed pairs \\
\midrule
\multicolumn{3}{l}{\emph{Subcases for } $u=0$ (at least one of $a,b$ a $2$-unit):} \\
\quad (a odd, b odd)       & $0$    & $1024$ \\
\quad (a odd, b even)      & $0$    & $1024$ \\
\quad (a even, b odd)      & $1024$ & $0$ \\
\midrule
$u\ge1$ (both divisible by $2$) & $1024$ & $0$ \\
\midrule
\textbf{Total} & \textbf{2048} & \textbf{2048} \\
\bottomrule
\end{tabular}
\end{table}
\begin{table}[h]\label{table 1 2}
\centering
\caption{Local enumeration modulo $3^6$ for the $(2,3)$ family.
Total residue pairs $(a,b)\bmod 3^6$: $3^{12}=531441$.}
\begin{tabular}{l r r}
\toprule
Case & \# excluded pairs & \# allowed pairs \\
\midrule
\multicolumn{3}{l}{\emph{Subcases for } $u=0$ (at least one of $a,b$ a $3$-unit):} \\
\quad (a $3$-unit, b $3$-unit)            & $0$      & $236\,196$ \\
\quad (a $3$-unit, b divisible by 3)  & $0$      & $118\,098$ \\
\quad (a divisible by 3, b unit)  & $118\,098$ & $0$ \\
\midrule
$u\ge1$ (both divisible by $3$)   & $59\,049$ & $0$ \\
\midrule
\textbf{Total} & \textbf{177\,147} & \textbf{354\,294} \\
\bottomrule
\end{tabular}
\end{table}
\end{proof}

\newpage
\begin{lemma}
\label{lemma:r_l_formula}
Let $\ell$ be an odd prime with $\ell\nmid 6$, and define
\[
p(t)=t^4+36t^2+432, \qquad 
r_\ell = \#\{t \in \F_\ell : p(t)=0\}.
\]
Set $\Delta = -432$, and denote by $\left( \tfrac{\cdot}{\ell} \right)$ the Legendre symbol modulo~$\ell$. Then
\[
r_\ell =
\begin{cases}
0, & \text{if } \left( \tfrac{\Delta}{\ell} \right) = -1, \\[6pt]
2 + \left( \tfrac{u_1}{\ell} \right) + \left( \tfrac{u_2}{\ell} \right), & \text{if } \left( \tfrac{\Delta}{\ell} \right) = 1,
\end{cases}
\]
where $u_1,u_2 \in \F_\ell$ are the roots of $u^2 + 36u + 432 = 0$ (so $u_1+u_2=-36$ and $u_1u_2=432$). In particular,
\[
\begin{aligned}
&\left( \tfrac{\Delta}{\ell} \right) = -1 \ \Longrightarrow\  r_\ell = 0, \\[4pt]
&\left( \tfrac{\Delta}{\ell} \right) = 1,\ \left( \tfrac{432}{\ell} \right) = -1 \ \Longrightarrow\  r_\ell = 2, \\[4pt]
&\left( \tfrac{\Delta}{\ell} \right) = 1,\ \left( \tfrac{432}{\ell} \right) = 1 \ \Longrightarrow\  r_\ell \in \{0,4\}.
\end{aligned}
\]
Equivalently, if $\sqrt{\Delta} \in \F_\ell$ exists, then
\[
r_\ell = 2 + 
\left( \frac{ \frac{-36 + \sqrt{\Delta}}{2} }{\ell} \right) +
\left( \frac{ \frac{-36 - \sqrt{\Delta}}{2} }{\ell} \right),
\]
and this expression is independent of the choice of sign of $\sqrt{\Delta}$.
\end{lemma}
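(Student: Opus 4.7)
The plan is to reduce the quartic to a quadratic via the substitution $u = t^2$, and then count roots stage by stage using the Legendre symbol. Since $p(t)$ depends only on $t^2$, I first observe that
\[
p(t) = q(t^2), \qquad q(u) := u^2 + 36 u + 432,
\]
so that counting roots of $p$ in $\F_\ell$ decomposes into two steps: (i) count the roots of $q(u)$ in $\F_\ell$; (ii) for each root $u_i$, count the number of $t \in \F_\ell$ with $t^2 = u_i$.

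First I would compute the discriminant of $q$, namely $36^2 - 4\cdot 432 = -432 = \Delta$. By the standard theory of quadratics over $\F_\ell$ (valid since $\ell$ is odd), $q$ has a root in $\F_\ell$ if and only if $\left(\tfrac{\Delta}{\ell}\right) \neq -1$. When $\left(\tfrac{\Delta}{\ell}\right) = -1$ the polynomial $q$ is irreducible over $\F_\ell$, so $p(t)$ has no roots and $r_\ell = 0$, establishing the first case. Note that since $\ell \nmid 6$ we have $\ell \nmid 432$, so $\Delta \not\equiv 0$ and we never enter the degenerate case with a repeated root of $q$.

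Next, suppose $\left(\tfrac{\Delta}{\ell}\right) = 1$, and let $u_1, u_2 \in \F_\ell^\times$ be the two distinct roots of $q$, given by $u_{1,2} = (-36 \pm \sqrt{\Delta})/2$. Note that $u_1 u_2 = 432 \neq 0 \pmod\ell$, so neither $u_i$ vanishes. Then $t^2 = u_i$ has exactly $1 + \left(\tfrac{u_i}{\ell}\right)$ solutions, and summing gives
\[
r_\ell \;=\; 2 + \left(\tfrac{u_1}{\ell}\right) + \left(\tfrac{u_2}{\ell}\right),
\]
which is the main formula. The symmetry in $u_1, u_2$ makes the expression independent of the chosen sign of $\sqrt\Delta$.

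Finally, I would extract the trichotomy stated in the lemma by using multiplicativity: $\left(\tfrac{u_1}{\ell}\right) \left(\tfrac{u_2}{\ell}\right) = \left(\tfrac{u_1 u_2}{\ell}\right) = \left(\tfrac{432}{\ell}\right)$. If $\left(\tfrac{432}{\ell}\right) = -1$ the two Legendre symbols are opposite, contributing $0$ and giving $r_\ell = 2$. If $\left(\tfrac{432}{\ell}\right) = 1$ the symbols agree, giving $r_\ell = 0$ or $r_\ell = 4$ according as they are both $-1$ or both $+1$. This is a straightforward verification rather than a real obstacle; the only subtlety to watch is confirming $\ell \nmid 432$ to avoid degenerate cases, which is guaranteed by $\ell \nmid 6$.
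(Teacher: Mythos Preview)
Your proof is correct and follows essentially the same approach as the paper: substitute $u=t^2$ to reduce to the quadratic $u^2+36u+432$, use its discriminant $\Delta=-432$ to decide whether the roots $u_1,u_2$ lie in $\F_\ell$, and then count square roots of each $u_i$ via $1+\left(\tfrac{u_i}{\ell}\right)$, finishing with the multiplicativity relation $\left(\tfrac{u_1}{\ell}\right)\left(\tfrac{u_2}{\ell}\right)=\left(\tfrac{432}{\ell}\right)$. If anything, you are slightly more careful than the paper in explicitly verifying the discriminant computation and noting that $\ell\nmid 432$ rules out the degenerate cases $\Delta\equiv 0$ and $u_i\equiv 0$.
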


\begin{proof}
Writing $u = t^2$, we have
\[
p(t) = (t^2 - u_1)(t^2 - u_2),
\]
where $u_1,u_2$ are the roots of $u^2 + 36u + 432 = 0$. If $\left( \tfrac{\Delta}{\ell} \right) = -1$, then $u_1,u_2 \notin \F_\ell$ and $p(t)$ has no root in $\F_\ell$, giving $r_\ell = 0$. If $\left( \tfrac{\Delta}{\ell} \right) = 1$, then $u_1,u_2 \in \F_\ell$, and each equation $t^2 = u_i$ contributes $1 + \left( \tfrac{u_i}{\ell} \right)$ solutions in $\F_\ell$. Hence
\[
r_\ell = (1 + \tfrac{u_1}{\ell}) + (1 + \tfrac{u_2}{\ell})
= 2 + \left( \tfrac{u_1}{\ell} \right) + \left( \tfrac{u_2}{\ell} \right),
\]
which yields the stated cases using $u_1u_2=432$.
\end{proof}

\begin{theorem}\label{(2,3) theorem section 4}
With respect to notation above, one has the asymptotic:
   \begin{equation}\label{F1 eqn}\# \cF_{1}(X)\sim \mathfrak{d}_2\mathfrak{d}_3\prod_{\ell\neq 2,3} \left(1-\frac{1}{\ell^2}-\frac{r_\ell(\ell-1)}{\ell^5}\right)\op{Area}(\mathcal{R})X^{1/9},\end{equation} where:
   \begin{itemize}
       \item $r_\ell$ is the number of solutions of $t^4+36t^2+432$ in $\Z/\ell\Z$,
       \item $\mathfrak{d}_2=\frac{1}{2}$ and $\mathfrak{d}_3=\frac{2}{3}$,
       \item $\mathcal{R}=\{(u,v)\in \mathbb{R}^2\mid |A(u,v)|\leq \frac{1}{\sqrt[3]{4}}, |B(u,v)|\leq\frac{1}{\sqrt[2]{27}}\}$.
   \end{itemize} 
   Let $\cC_{(2,3)}$ be the set of elliptic curves $E_{A,B}$ with $(2,3)$-entanglement of Type $\Z/2\Z$. Then, we find that $\cC_{(2,3)}(X)\gg X^{1/9}$.
\end{theorem}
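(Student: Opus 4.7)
The proof reduces to a direct application of Theorem \ref{main thm section 3} to the data $(A,B,C)$ arising from the family $\cF_1$. I would first verify Assumption \ref{main ass}: condition (i) gives $d = 3\deg A = 2\deg B = 18$; since $c(t) = t^4 + 36t^2 + 432 = (t^2+18)^2 + 108$ is strictly positive on $\mathbb{R}$, so that $C(u,v) \ge 0$ has no real projective roots, condition (ii) holds; condition (iii) is immediate, and condition (iv) holds with $r = \deg C = 4 \leq 4$. With Assumption \ref{main ass} verified, Theorem \ref{main thm section 3} yields
\[
\#\cC(X) \sim \prod_{\ell} \mathfrak{d}_\ell \cdot \op{Area}(\mathcal{R}) \cdot X^{1/9},
\]
where $\cC(X)$ is the set of parameter pairs $(a,b)$ with $\gcd(a,b) = 1$, $m(a,b) = 1$, and $H(a,b) \leq X$, in natural bijection with the globally minimal members of $\cF_1(X)$.

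Next I would evaluate each local density $\mathfrak{d}_\ell$ using Lemma \ref{dl lemma for (2,3) entanglements:full}. For $\ell \notin \Sigma = \{2,3\}$, the lemma gives $\mathfrak{d}_\ell = 1 - 1/\ell^2 - r_\ell(\ell-1)/\ell^5$, where $r_\ell$ is the number of roots of $t^4 + 36t^2 + 432$ in $\mathbb{F}_\ell$. For the exceptional primes, the detailed case-by-case valuation analysis in the same lemma (summarized in Tables 1 and 2) produces the explicit values $\mathfrak{d}_2 = 1/2$ and $\mathfrak{d}_3 = 2/3$. Substituting these values into the asymptotic above yields the claimed formula \eqref{F1 eqn}, with $\mathcal{R}$ as specified in the statement (the latter matches the general definition in Theorem \ref{thm b} after the normalisations $4|A|^3 \le 1$ and $27|B|^2 \le 1$).

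For the lower bound on $\#\cC_{(2,3)}(X)$, I would note that every curve $\cE_{a,b}$ in $\cF_1$ exhibits the prescribed unexplained $(2,3)$-entanglement of type $\Z/2\Z$ by the construction of Daniels--Morrow \cite{danielsmorrow}, so $\cF_1 \subseteq \cC_{(2,3)}$. The Euler product $\prod_\ell \mathfrak{d}_\ell$ is strictly positive, since each factor lies in $(0,1]$ and the general tail bound $\mathfrak{d}_\ell \geq 1 - 1/\ell^2 - (r+\ell)/\ell^8$ ensures convergence of $-\sum_\ell \log \mathfrak{d}_\ell$. Combined with $\op{Area}(\mathcal{R}) > 0$ (a consequence of Lemma \ref{bounded area lemma} and the fact that $\mathcal{R}$ has non-empty interior near any $(u_0,v_0)$ with $A(u_0,v_0)=B(u_0,v_0)=0$ avoided), this gives
\[
\#\cC_{(2,3)}(X) \;\geq\; \#\cF_1(X) \;\gg\; X^{1/9}.
\]

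The main obstacle in executing this plan is the exact evaluation of $\mathfrak{d}_2$ and $\mathfrak{d}_3$, which requires a careful analysis of the simultaneous non-minimality conditions $v_\ell(A) \geq 4$ and $v_\ell(B) \geq 6$ at the exceptional primes, together with the enumeration of the relevant residue classes modulo $\ell^6$. This has already been carried out in Lemma \ref{dl lemma for (2,3) entanglements:full}, so the remainder of the proof is essentially formal given Theorem \ref{main thm section 3}.
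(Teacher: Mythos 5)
Your proposal is correct and follows essentially the same route as the paper: verify Assumption \ref{main ass} for the homogenized data of $\cF_1$ (giving $d=18$, hence exponent $2/d=1/9$), invoke Theorem \ref{main thm section 3} for the asymptotic, and substitute the local densities from Lemma \ref{dl lemma for (2,3) entanglements:full}, with positivity of the Euler product and of $\op{Area}(\mathcal{R})$ giving the lower bound for $\cC_{(2,3)}(X)$. The paper's own proof is just a terse citation of these same two results, so your more explicit write-up is a faithful expansion of it.
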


\begin{proof}
    The Assumption \ref{main ass} is satisfied with $d=18$. The result follows from Lemma \ref{dl lemma for (2,3) entanglements:full} and Theorem \ref{main thm section 3}.
\end{proof}

\begin{remark}\label{non-vanishing of product}
    The constant \[\mathfrak{d}_2\mathfrak{d}_3\prod_{\ell\neq 2,3} \left(1-\frac{1}{\ell^2}-\frac{r_\ell(\ell-1)}{\ell^5}\right)\]in \eqref{F1 eqn} is nonzero since the product does not diverge to $0$. Equivalently, the sum of logarithms
    \[\begin{split}&-\sum_{\ell\neq 2,3} \log\left(1-\frac{1}{\ell^2}-\frac{r_\ell(\ell-1)}{\ell^5}\right)\\
    =& \sum_{\ell\neq 2,3} \sum_{n\geq 1}\frac{1}{n} \left(\frac{1}{\ell^2}+\frac{r_\ell(\ell-1)}{\ell^5}\right)^n \\
    =& \sum_{\ell\neq 2,3} \sum_{n\geq 1}\frac{1}{n \ell^{2n}} \left(1+\frac{r_\ell(\ell-1)}{\ell^3}\right)^n \\
    \end{split}\]
    converges since $r_\ell$ is bounded (in fact, $r_\ell\leq 4$).
\end{remark}

\subsection{(2,5) entanglements of Type $\Z/2\Z$}
These entanglements come in two families, as described in \cite{danielsmorrow}. We consider one of these families, with $j$-map given by $j(t)=\frac{\left(t^{4}+10 t^{2}+5\right)^{3}}{t^{2}}$. Let $\cF_2$ consist of elliptic curves $E:y^2=x^3+f(t)x+g(t)$, where: \[\begin{split}
f(t) &:=(-3 t^{4}-30 t^{2}-15)c(t),\\
g(t) &:=2(t^{4}+4 t^{2}-1)(t^{4}+22 t^{2}+125)c(t),\\
c(t) & :=t^{4}+22 t^{2}+125.\\
\end{split}\]
According to the first table on \cite[p.~853]{danielsmorrow}, each of these elliptic curves has a $(2,5)$-entanglement of Type $\Z/2\Z$. We remark that there is yet another family of elliptic curves with the same entanglement type, which we have not considered here. We have that $A_0(a,b)=(-3 a^{4}-30 a^{2}b^2-15b^4)$, $B_0(a,b)=2(a^{4}+4 a^{2}b^2-b^4)(a^{4}+22 a^{2}b^2+125b^4)$ and $C(a,b)=a^4+22a^2b^2+125b^4$ and $\Sigma=\{2,3,5\}$. We find that $d=24$, $r=4$ and the Assumption \ref{main ass} is satisfied.

\begin{lemma}\label{dl lemma1 for (2,5) entanglements:full}
For every prime $\ell\notin\Sigma$ one has
\[
\mathfrak{d}_\ell \;=\; 1 - \frac{1}{\ell^2} - \frac{r_\ell(\ell-1)}{\ell^5},
\]
where $r_\ell$ is the number of solutions of the congruence
\[
t^4 + 22 t^2 + 125 \equiv 0 \pmod{\ell}.
\]
\end{lemma}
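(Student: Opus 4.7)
The plan is to adapt, step by step, the argument given for primes $\ell\notin\Sigma$ in the proof of Lemma \ref{dl lemma for (2,3) entanglements:full}. The key structural input is the factorization
\[
B(a,b) \;=\; 2\bigl(a^{4}+4a^{2}b^{2}-b^{4}\bigr)\,C(a,b)^{2},
\]
from which $C\mid A$ and $C^{2}\mid B$, exactly as in the $(2,3)$ case. This identity is a direct algebraic check once the definitions of $A$, $B$, and $C$ for the $(2,5)$ family are written out.

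I would first establish an analogue of Lemma \ref{dl lemma for (2,3) entanglements}: for $\ell\notin\Sigma=\{2,3,5\}$, the simultaneous non-minimality condition $\ell^{4}\mid A(a,b)$ and $\ell^{6}\mid B(a,b)$ is equivalent to $\ell^{4}\mid C(a,b)$. One direction is immediate from $C\mid A$ and $C^{2}\mid B$ (in fact $\ell^{8}\mid B$, which is stronger than needed). For the converse, the hypothesis $\ell\nmid\op{Res}(A_{0},B_{0})$ in both affine charts rules out any common zero of $A_{0}$ and $B_{0}$ modulo $\ell$, so any simultaneous zero of $A$ and $B$ modulo $\ell$ must arise from a zero of $C$; Hensel's lemma applied to $C$ (using $\ell\nmid\op{Disc}(C)$) then lifts this to the required $\ell$-adic depth.

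Next I would enumerate the bad residue classes in $(\Z/\ell^{6}\Z)^{2}$, partitioned as $a_{\ell}\sqcup b_{\ell}$ in the notation of the excerpt. The set $a_{\ell}$ of pairs with $\ell\mid a$ and $\ell\mid b$ has cardinality $\ell^{10}$. For $b_{\ell}$, suppose exactly one of $a,b$ is divisible by $\ell$. If $\ell\mid b$ and $\ell\nmid a$, then $C(a,b)\equiv a^{4}\pmod{\ell}$ is nonzero, while if $\ell\mid a$ and $\ell\nmid b$, then $C(a,b)\equiv 125\,b^{4}\pmod{\ell}$ is nonzero since $\ell\neq 5$. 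Both cases contradict $\ell^{4}\mid C(a,b)$, so every pair in $b_{\ell}$ has both coordinates coprime to $\ell$. Setting $t=a/b\in(\Z/\ell^{4}\Z)^{\times}$, the condition $\ell^{4}\mid C(a,b)$ becomes $p(t):=t^{4}+22t^{2}+125\equiv 0\pmod{\ell^{4}}$, which has exactly $r_{\ell}$ solutions by Hensel's lemma (the roots of $p$ in $\F_{\ell}$ are simple because $\ell\nmid\op{Disc}(p)$). Counting yields $r_{\ell}\,\ell^{3}(\ell-1)$ pairs in $(\Z/\ell^{4}\Z)^{2}$, and lifting to $(\Z/\ell^{6}\Z)^{2}$ multiplies by $\ell^{4}$, giving $\#b_{\ell}=r_{\ell}\,\ell^{7}(\ell-1)$.

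Combining the two cases, $\#s_{\ell^{6}}'=\ell^{10}+r_{\ell}\ell^{7}(\ell-1)$, and dividing by $\ell^{12}$ yields
\[
\mathfrak{d}_{\ell} \;=\; 1-\frac{1}{\ell^{2}}-\frac{r_{\ell}(\ell-1)}{\ell^{5}},
\]
as claimed. The only real obstacle is routine bookkeeping: verifying the factorization of $B$ by direct expansion, and confirming that $\{2,3,5\}$ indeed exhausts the primes dividing $\op{Res}(A_{0}(t,1),B_{0}(t,1))$, $\op{Res}(A_{0}(1,t),B_{0}(1,t))$, $\op{Disc}(C(t,1))$, and $\op{Disc}(C(1,t))$ for the $(2,5)$ family. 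With these verifications in hand, the argument is entirely parallel to the $\ell\notin\Sigma$ portion of Lemma \ref{dl lemma for (2,3) entanglements:full}.
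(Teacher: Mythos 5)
Your proposal is correct and follows essentially the same route as the paper's proof: both reduce the simultaneous non-minimality condition to $\ell^4\mid C(a,b)$ using the resultant and discriminant hypotheses for $\ell\notin\Sigma$, and then count the excluded classes as the $\ell\mid\gcd(a,b)$ block together with the Hensel-lifted simple roots of $t^4+22t^2+125$, giving $1-\tfrac{1}{\ell^2}-\tfrac{r_\ell(\ell-1)}{\ell^5}$. The only (immaterial) difference is that you carry out the count modulo $\ell^6$ while the paper works modulo $\ell^4$, and you spell out the reduction to $C$ that the paper leaves implicit by reference to the $(2,3)$ case.
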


\begin{proof}
\par As in Lemma~\ref{dl lemma for (2,3) entanglements:full}, we first treat the case $\ell\notin\Sigma$.  
Then
\[
\mathfrak{d}_\ell = \frac{|s_{\ell^4}|}{\ell^8},
\]
where $s_{\ell^4}$ is the subset of $(\Z/\ell^4\Z)^2$ consisting of pairs $(a,b)$ such that $C(a,b)\not\equiv0\pmod{\ell^4}$.  The complement consists of all pairs satisfying $C(a,b)\equiv0\pmod{\ell^4}$.  We count these.

If $\ell\mid a,b$ then clearly $\ell^4\mid C(a,b)$, giving $\ell^6$ solutions.  
If $\ell\nmid a$ or $\ell\nmid b$, then after scaling we may assume $\ell\nmid b$ and set $t=a/b$.  Writing
\[
p(t)=t^4+22t^2+125,
\]
we note that $p'(t)=4t^3+44t$, so the roots of $p(t)\equiv0\pmod{\ell}$ are simple for $\ell\notin\Sigma$.  
Hence, each such root $t_0\in\Z/\ell\Z$ lifts uniquely to a root modulo $\ell^4$ by Hensel's lemma.  Thus there are exactly 
$r_\ell\,\ell^3(\ell-1)$ such pairs with $\ell\nmid b$. We find that
\[
\mathfrak{d}_\ell = 1 - \frac{1}{\ell^2} - \frac{r_\ell(\ell-1)}{\ell^5}.
\]
\end{proof}

\begin{theorem}\label{(2, 5) theorem section 4}
    With respect to notation above,
   \[ \# \cF_2(X)\sim \mathfrak{d}_2\mathfrak{d}_3 \mathfrak{d}_5\prod_{\ell\neq 2,3,5} \left(1-\frac{1}{\ell^2}-\frac{r_\ell(\ell-1)}{\ell^5}\right)\op{Area}(\mathcal{R})X^{1/12},\]
   \begin{itemize}
       \item $r_\ell$ is the number of solutions of $t^4+22t^2+125$ in $\Z/\ell\Z$,
       \item $\mathcal{R}=\{(u,v)\in \mathbb{R}^2\mid |A(u,v)|\leq \frac{1}{\sqrt[3]{4}}, |B(u,v)|\leq\frac{1}{\sqrt[2]{27}}\}$.
   \end{itemize} 
   Let $\cC_{(2,5)}$ be the set of elliptic curves $E_{A,B}$ with $(2,5)$-entanglement of Type $\Z/2\Z$. Then we find that $\cC_{(2,5)}(X)
    \gg X^{1/12}$.
\end{theorem}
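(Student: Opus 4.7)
The plan is to derive the theorem as a direct application of Theorem \ref{main thm section 3} to the family $\cF_2$, in close parallel with the proof of Theorem \ref{(2,3) theorem section 4}. First I would verify Assumption \ref{main ass} for the homogenized polynomials $A, B, C$ of $f, g, c$. Using the factorization $g(t) = 2(t^4+4t^2-1)c(t)^2$, one reads off $\deg A = 8$ and $\deg B = 12$, so $3\deg A = 2\deg B = 24$, giving $d = 24$; also $r = \deg C = 4$, saturating the bound $r \leq 4$ in part (iv). For condition (ii) I would observe that $C(a,b) = a^4 + 22a^2b^2 + 125b^4$ has no real zero in $\bbP^1(\bbR)$: setting $u = a^2/b^2$, the quadratic $u^2 + 22u + 125$ has discriminant $484 - 500 = -16 < 0$, so $C > 0$ on $\bbR^2 \setminus \{0\}$. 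Any common real root of $A$ and $B$ therefore comes from a common root of $A_0 = -3(a^4+10a^2b^2+5b^4)$ and $B_0 = 2(a^4+4a^2b^2-b^4)\cdot C$. Computing with $t = a/b$ one has $\gcd(t^4+10t^2+5,\,t^4+4t^2-1)=1$ (the difference is $6(t^2+1)$, and substituting $t^2=-1$ into $t^4+10t^2+5$ gives $-4\neq 0$) and $\gcd(t^4+10t^2+5,\,C)=1$ (the difference is $-12(t^2+10)$, and substituting $t^2=-10$ gives $5\neq 0$), which simultaneously establishes condition (iv) with $\gcd(A,B)=C$ and condition (ii). Condition (iii) is immediate from the explicit formula for $4A^3+27B^2$.

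Next, Definition \ref{defn of Sigma} is evaluated by computing the four resultants and discriminants of $A_0$, $B_0$, and $C$ to confirm $\Sigma = \{2, 3, 5\}$. With the assumption in place, Theorem \ref{main thm section 3} yields
\[
\#\cC(X) \sim \prod_\ell \mathfrak{d}_\ell\cdot\op{Area}(\mathcal{R})\cdot X^{2/d} = \prod_\ell \mathfrak{d}_\ell\cdot\op{Area}(\mathcal{R})\cdot X^{1/12}.
\]
For $\ell \notin \Sigma$, the local density $\mathfrak{d}_\ell = 1 - \ell^{-2} - r_\ell(\ell-1)\ell^{-5}$ is supplied by Lemma \ref{dl lemma1 for (2,5) entanglements:full}, matching the general form used in the statement. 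For $\ell\in\{2,3,5\}$ the statement treats $\mathfrak{d}_\ell$ as an abstract positive constant; its positivity follows from the general framework, and explicit values could be obtained if desired by the same case-by-case valuation analysis as in Lemma \ref{dl lemma for (2,3) entanglements:full}. The non-vanishing of the infinite product is guaranteed by the same logarithmic-convergence argument as in Remark \ref{non-vanishing of product}, using the uniform bound $r_\ell \leq 4$.

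Finally, the lower bound $\#\cC_{(2,5)}(X) \gg X^{1/12}$ is deduced from the inclusion $\cF_2(X) \subseteq \cC_{(2,5)}(X)$, which holds by construction since every curve in $\cF_2$ carries the prescribed unexplained $(2,5)$-entanglement of type $\Z/2\Z$. The main technical obstacle is the local casework at the exceptional primes $\ell\in\{2,3,5\}$, which requires a careful enumeration of residue classes mirroring Lemma \ref{dl lemma for (2,3) entanglements:full} together with its accompanying tables; in particular the prime $\ell=5$ introduces a genuinely new case not present in the $(2,3)$ analysis, and the quartic $t^4+4t^2-1$ appearing in $B_0$ must be treated alongside $C$ in the valuation analysis. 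Fortunately, this level of detail is only required if explicit values of the $\mathfrak{d}_\ell$ at the exceptional primes are sought; the asymptotic as stated needs merely their strict positivity, which is automatic.
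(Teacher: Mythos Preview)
Your proposal is correct and follows essentially the same route as the paper: verify Assumption \ref{main ass} for the homogenizations (the paper records $d=24$, $r=4$, and $\Sigma=\{2,3,5\}$ in the text preceding the theorem), then invoke Theorem \ref{main thm section 3} together with Lemma \ref{dl lemma1 for (2,5) entanglements:full}. The paper's own proof is a one-line citation of exactly these two results; your extra work checking coprimality of $A_0$ and $B_0$ and the absence of real roots of $C$ simply makes explicit what the paper asserts without detail.
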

\begin{proof}
    The result follows from Lemma \ref{dl lemma1 for (2,5) entanglements:full} and Theorem \ref{main thm section 3}.
\end{proof}

\bibliographystyle{alpha}
\bibliography{references}
\end{document}